\renewcommand*{\top}{%
  {\mathpalette\@transpose{}}%
}
\newcommand*{\@transpose}[2]{%
  \raisebox{\depth}{$\m@th#1\scriptscriptstyle\mathsf{T}$}%
}
\newtheorem{theorem}{Theorem}
\newtheorem{remark}[theorem]{Remark}
\begin{document}

\title[Dual-path fixing and set cover]{The dual-path fixing strategy and its\\ application to the set-covering problem}  

\author{\rm Paulo Michel F. Yamagishi}
\address{P. Yamagishi: Universidade Federal do Rio de Janeiro, ypaulo@gmail.com}

\author{Marcia Fampa}
\address{M. Fampa: Universidade Federal do Rio de Janeiro, fampa@cos.ufrj.br}

\author{Jon Lee}
\address{J. Lee: University of Michigan, jonxlee@umich.edu}

\begin{abstract}
We introduce the \emph{dual-path fixing strategy} to
exploit dual algorithms for solving relaxations of 
mixed-integer nonlinear-optimization problems. 
Such dual algorithms are naturally applied in the
context of branch-and-bound, and eventual impact on
the success of branch-and-bound is our strong motivation. 
Our fixing strategy aims to be more powerful than 
the common strategy of fixing variables
based on 
a single dual-feasible solution (e.g., 
standard reduced-cost fixing for mixed-integer linear optimization),
but to be much faster than ``strong fixing'', essentially requiring
no more time than that of the dual algorithm that we exploit.
We have successfully tested our ideas on mixed-integer linear-optimization set-covering instances 
from the literature, in the context of the dual-simplex method
applied to the continuous relaxations.
\end{abstract}

\maketitle


\section{Introduction}
\label{sec:introduction}

A well-known and powerful technique that has been used to enhance the
performance of mixed-integer linear-optimization (MILO) software is
``reduced-cost fixing'', known already in the early 1980's (see  \cite[Sec.~3.3]{reducedcostfixing} and \cite[Sec. 2]{BalasMartin}). For a minimization problem, the method
seeks to fix integer variables at either their lower or upper bounds,
using the best available objective-value upper bound (i.e., the objective value of
the incumbent feasible solution) and an \emph{optimal} solution of the
dual of the continuous relaxation. In a branch-and-bound (B\&B) context,
which is the work-horse exact algorithm for MILO, 
the information to fix, for each B\&B subproblem, is immediately available
upon solving the continuous relaxation of the subproblem
(as all common linear-optimization solvers provide primal-dual certificates
of optimality). So,
with almost no additional effort, reduced-cost fixing is employed, and
often with great effect (i.e., many variables fixed). 
More generally, for mixed-integer \emph{nonlinear} optimization (MINLO)
problems, in particular some arising in experimental design,
an optimal solution (or near-optimal feasible solution) of 
a convex dual has been exploited to fix variables
(see e.g., \cite{AFLW_Using,Anstreicher_BQP_entropy,Kurt_linx,ChenFampaLee_Fact,PonteFampaLeeMPB}). Again, the motivation is to fix integer variables (typically at 0 or 1)
using information that is already available in the context of B\&B. 

In the general context of 
MINLO, it is natural to consider whether we can easily fix more variables than 
can be fixed using a single optimal solution (or a single near-optimal feasible solution)
of the dual of the continuous relaxation. 
Narrowing the focus to MILO,  it is natural to consider whether we can easily fix more variables than 
reduced-cost fixing is able to fix. Motivated by a modern transportation 
application of set-covering type (MILO instances), \cite{damcov}
developed and tested \emph{strong fixing}, which aims to fix, one by one,
integer variables to their upper or lower bounds, by solving a linear-optimization
problem that is formulated with precisely such a goal. In effect, given an integer variable and an upper or lower bound, strong fixing finds a feasible solution 
to the dual of the continuous relaxation 
that can fix that variable at that bound, \emph{whenever such a 
dual-feasible solution exists.} Typically, strong fixing can fix more variables than reduced-cost fixing, but it is very expensive to carry out.
\cite{damcov} developed methods aimed at speeding up the process, but
practicality was not achieved. Considering again the broader MINLO
context, employing strong fixing can be more prohibitive,
because convex nonlinear-optimization relaxations are typically more expensive to solve than linear-optimization problems. In fact, \cite{damcov} also 
developed strong fixing for a  mixed-integer
second-order-cone formulation, but with less computational success
than they saw for their linear formulation. Another related work is \cite{Cire}, which proposes solving a 0/1 program to obtain a single dual-feasible solution that maximizes the number of fixed variables. This approach is also computationally expensive and does not guarantee the maximum reduction achieved by strong fixing.

Our contribution is to introduce, develop and initiate testing \emph{dual-path fixing}.
We can see it as a method inspired by both the success of reduced-cost fixing
for MILO (and similarly, the success of fixing based on a single good dual solution for MINLO instances that come from the area of experimental design),
and the power of strong fixing. Generally, our goal is to fix more
variables than can be achieved by using a single feasible solution
of the dual of the continuous relaxation with no significant additional cost. 
Our idea is to exploit information 
that is available while one is in the \emph{algorithmic process} of
calculating an optimal solution of a continuous relaxation.
In the context of B\&B, this would typically be a ``dual algorithm''.
Concretely, for MILO, B\&B subproblems are typically solved by the
dual-simplex algorithm, and so what we have available is a sequence of
dual-feasible solutions (to the continuous relaxation). 
As was observed and exploited in \cite{damcov}, it is only 
dual \emph{feasibility} that is needed to fix, and not dual \emph{optimality}
(which is employed by reduced-cost fixing). For some inspiration on the broader applicability of our idea,
we have \cite{Mitchell2,Mitchell1} 
which applied a variant of Karmarkar's algorithm to
the duals of relaxations for a 0/1 linear formulation of a 
combinatorial-optimization problem, in the context of a pure cutting-plane approach. 

We wish to stress that, conceptually, our method applies very generally,
to even \emph{nonconvex} MINLO.
In that context, the Lagrangian dual is employed
(see e.g., \cite{FLbook} for how this works in our context), 
generalizing the
dual of the continuous relaxation of a MILO instance. 
In \S\ref{sec:lagrange}, we demonstrate how
variable fixing based on Lagrangian duality works in general, for 0/1 MINLO.
Further, we indicate how this particularizes for 0/1 MILO, and further particularizes
to reduced-cost fixing. 
In \S\ref{sec:SetCover}, we specialize, enhance and test dual-path fixing for
MILO formulations of set-covering instances.
In \S\ref{sec:outlook}, we describe ideas for further exploration. 

\medskip
\noindent{\bf Notation.}
We denote an all-one vector by $\mathbf{e}$, and 
 a zero-vector simply by $0$. For matrix $X$, we denote column $j$ by $X_{\cdot j}$\,.


\section{Fixing variables in binary nonlinear programs}\label{sec:lagrange}

In this section, we develop our ideas for 0/1 MINLO formulations. 
The extension to arbitrary integer variables
with bounds is straightforward, but we concentrate on 
the 0/1 situation for pedagogical simplicity and
because our target applications all have 0/1 integer variables.
As for testing, which we report on in \S\ref{sec:SetCover},
we will confine our experiments to challenging 0/1 MILO formulations
of set-covering instances from the literature. We leave it as 
future work, to broad the scope of our experiments, based on what we present in
the present section.

\subsection{Lagrangian duality and fixing for 0/1 MINLO}\label{sec:LagrangeFixing}

Consider the fairly-general  binary nonlinear-optimization problem
\begin{equation}
	\label{BNLP}\tag{\mbox{BNLP}}
	\begin{array}{llll}
		 \min  & f(x,y)\\
		\mbox{s.\ t.} \; & g(x,y)  \leq  0,  \\
		               & h(x,y)=0, \\
        &x  \in \{0,1\}^{n},~y  \in \mathbb{R}^{q},
	\end{array}
\end{equation}
where  $f: \mathbb{R}^{n}\times \mathbb{R}^{q}\rightarrow \mathbb{R}$, $g: \mathbb{R}^{n}\times \mathbb{R}^{q}\rightarrow \mathbb{R}^m$, 
$h: \mathbb{R}^{n}\times \mathbb{R}^{q}\rightarrow \mathbb{R}^p$,
its continuous relaxation
\begin{equation}
	\label{relP}\tag{$\widebar{\mbox{P}}_{\mbox{\tiny NLP}}$}
	\begin{array}{llll}
		\min  & f(x,y)\\
	\mbox{s.\ t.} \; & g(x,y)  \leq  0,  \\
		               &  h(x,y)=0, \\
        &0\leq x\leq \mathbf{e},
	\end{array}
\end{equation}
its associated Lagrangian function $\mathcal{L}:\mathbb{R}^{n}\times \mathbb{R}^{q}\times \mathbb{R}^m\times \mathbb{R}^p\times \mathbb{R}^n\times \mathbb{R}^n\rightarrow \mathbb{R}$,
\[
\mathcal{L}(x,y,\lambda,\mu,\upsilon,\nu)=f(x,y)+  \lambda^\top g(x,y) +   \mu^\top  h(x,y) - \upsilon^\top x + \nu^\top (x-\mathbf{e}),
\]
and, with $\mathcal{D}:= \mbox{dom}\, f \cap  \mbox{dom}\, g \cap  \mbox{dom}\, h$, its associated Lagrangian dual problem 
\begin{equation}
\label{dualP}\tag{$\widebar{\mbox{D}}_{\mbox{\tiny NLP}}$}
	\begin{array}{llll}
		& \displaystyle\max_{\lambda,\mu,\upsilon,\nu}  & -\nu^\top \mathbf{e} + \displaystyle \inf_{x\in\mathcal{D}} f(x,y)+  \lambda^\top g(x,y) +   \mu^\top  h(x,y) - (\upsilon - \nu)^\top x\\
		& \mbox{s.\ t.} \; & \lambda\geq 0,~\upsilon\geq 0,~\nu  \geq  0.
	\end{array}
\end{equation}
\begin{theorem}\label{thm:fixBNLP}
    Let
\begin{itemize}
    \item {\rm UB} be the objective-function value of a feasible solution for {\rm\ref{BNLP}},
    \item $(\hat{\lambda},\hat{\mu},\hat{\upsilon},\hat{\nu})$ be a feasible solution for  {\rm\ref{dualP}} with objective-function \hbox{value $\hat{\zeta}$}. 
\end{itemize}
Then, for every optimal solution  $(x^*,y^*)$ for {\rm\ref{BNLP}}, we have:
\[
\begin{array}{ll}
x_k^*=0, ~ \forall  k\in N \mbox{ such that } \mbox{\rm UB}-\hat{\zeta} < \hat{\upsilon}_k\,,\\
x_k^*=1, ~ \forall  k\in N \mbox{ such that } \mbox{\rm UB}-\hat{\zeta} < \hat{\nu}_k\,.\\
\end{array}
\]
\end{theorem}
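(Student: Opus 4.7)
The plan is to apply weak Lagrangian duality at an optimal binary solution $(x^*,y^*)$, and then exploit the sign constraints $\hat{\upsilon},\hat{\nu}\geq 0$ together with $x^*\in\{0,1\}^n$ to pass to a per-coordinate bound. First, I would rewrite the dual objective as $\hat{\zeta}=\inf_{(x,y)\in\mathcal{D}}\mathcal{L}(x,y,\hat{\lambda},\hat{\mu},\hat{\upsilon},\hat{\nu})$: the $-\hat{\nu}^\top\mathbf{e}$ sitting outside the infimum in \ref{dualP} is just the constant part of the $\hat{\nu}^\top(x-\mathbf{e})$ term in $\mathcal{L}$, so the two formulations agree. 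Since $(x^*,y^*)$ is feasible for \ref{BNLP}, it lies in $\mathcal{D}$, and weak duality gives
\[
\hat{\zeta}\;\leq\;\mathcal{L}(x^*,y^*,\hat{\lambda},\hat{\mu},\hat{\upsilon},\hat{\nu}).
\]

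Next, I would simplify the right-hand side using primal feasibility: $g(x^*,y^*)\leq 0$ with $\hat{\lambda}\geq 0$ gives $\hat{\lambda}^\top g(x^*,y^*)\leq 0$, while $h(x^*,y^*)=0$ kills the $\hat{\mu}$-term outright. Combining with $f(x^*,y^*)\leq\mbox{UB}$ (valid because UB is a feasible value and $(x^*,y^*)$ is optimal) and rearranging yields
\[
\hat{\upsilon}^\top x^* + \hat{\nu}^\top(\mathbf{e}-x^*)\;\leq\;\mbox{UB}-\hat{\zeta}.
\]
Because $\hat{\upsilon},\hat{\nu}\geq 0$ and $x^*\in\{0,1\}^n$, every summand on the left is nonnegative, so each individual coordinate satisfies $\hat{\upsilon}_k x_k^* + \hat{\nu}_k(1-x_k^*)\leq \mbox{UB}-\hat{\zeta}$.

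The conclusion then follows by contrapositive together with binarity: if $x_k^*=1$, the coordinate bound collapses to $\hat{\upsilon}_k\leq \mbox{UB}-\hat{\zeta}$, so $\mbox{UB}-\hat{\zeta}<\hat{\upsilon}_k$ forces $x_k^*=0$; analogously, $\mbox{UB}-\hat{\zeta}<\hat{\nu}_k$ forces $x_k^*=1$. I do not anticipate any serious technical obstacle, since the argument reduces to weak duality plus nonnegativity bookkeeping. The only point deserving care is the reading of the infimum in \ref{dualP}, which must be understood as ranging over both $x$ and $y$ on $\mathcal{D}$ so that evaluation at $(x^*,y^*)$ is admissible; I would make this explicit at the outset to avoid ambiguity.
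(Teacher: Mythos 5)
Your proof is correct, but it takes a genuinely different route from the paper's. The paper argues coordinate-by-coordinate: for each $k$ it forms the modified relaxation with $x_k\geq 0$ replaced by $x_k\geq 1$ (resp.\ $x_k\leq 1$ by $x_k\leq 0$), observes that $(\hat{\lambda},\hat{\mu},\hat{\upsilon},\hat{\nu})$ remains feasible for the modified Lagrangian dual with objective value $\hat{\zeta}+\hat{\upsilon}_k$ (resp.\ $\hat{\zeta}+\hat{\nu}_k$), and invokes weak duality for the modified problem to get a lower bound on the value of any solution with $x_k=1$ (resp.\ $x_k=0$). You instead apply weak duality once, at the optimal point $(x^*,y^*)$ itself, and use primal feasibility plus $f(x^*,y^*)\leq\mbox{UB}$ to obtain the aggregate inequality $\hat{\upsilon}^\top x^*+\hat{\nu}^\top(\mathbf{e}-x^*)\leq\mbox{UB}-\hat{\zeta}$, which you then split coordinate-wise using nonnegativity and binarity. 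Your argument is more self-contained (no modified duals to set up) and as a byproduct delivers something strictly stronger than the theorem: since the whole nonnegative sum is bounded by $\mbox{UB}-\hat{\zeta}$, you can in principle exclude \emph{combinations} of assignments (e.g., if $\hat{\upsilon}_j+\hat{\upsilon}_k>\mbox{UB}-\hat{\zeta}$ then $x_j^*$ and $x_k^*$ cannot both equal $1$), which the per-coordinate statement does not capture. The paper's formulation, on the other hand, directly motivates the strong-fixing subproblems \ref{fjlp0} and \ref{fjlp1}, which are precisely the duals of the modified relaxations it constructs. Your closing caveat about reading the infimum in \ref{dualP} as ranging over both $x$ and $y$ is well taken and worth stating, since the paper writes it only over $x$.
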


\begin{proof}
Consider \ref{relP} with the constraint $x_k\geq 0$ replaced by $x_k\geq 1$. The Lagrangian dual problem becomes then 
\begin{equation}
	\label{dualPmod}
	\begin{array}{llll}
		& \displaystyle\max_{\lambda,\mu,\upsilon,\nu}  & \upsilon_k-\nu^\top \mathbf{e} + \displaystyle \inf_{x\in\mathcal{D}} f(x,y)+  \lambda^\top g(x,y) +   \mu^\top  h(x,y) - (\upsilon - \nu)^\top x\\
		& \mbox{s.\ t.} \; & \lambda\geq 0,~\upsilon\geq 0,~\nu  \geq  0,
	\end{array}
\end{equation}
Similarly, consider \ref{relP} with the constraint $x_k\leq 1$ replaced by $x_k\leq 0$. The Lagrangian dual problem becomes then 
\begin{equation}
	\label{dualPmoda}
	\begin{array}{llll}
		& \displaystyle\max_{\lambda,\mu,\upsilon,\nu}  & \nu_k-\nu^\top \mathbf{e} + \displaystyle \inf_{x\in\mathcal{D}} f(x,y)+  \lambda^\top g(x,y) +   \mu^\top  h(x,y) - (\upsilon - \nu)^\top x\\
		& \mbox{s.\ t.} \; & \lambda\geq 0,~\upsilon\geq 0,~\nu\geq  0,
	\end{array}
\end{equation}
Notice that if $(\hat{\lambda},\hat{\mu},\hat{\upsilon},\hat{\nu})$ is a  feasible solution for \ref{dualP}\,, then it is a feasible solution of the modified dual problems \eqref{dualPmod} and \eqref{dualPmoda} as well.

The objective value of \eqref{dualPmod} at this solution is
 $\hat{\zeta}+\hat{\upsilon}_k$\,. 
Therefore $\hat{\zeta}+\hat{\upsilon}_k$
 is a lower bound on the objective value of solutions of \ref{BNLP} having $x_k=1$. So if $\hat{\zeta}+\hat{\upsilon}_k>UB$, then
 no optimal solution of \ref{BNLP}  can have $x_k=1$.  
Analogously, the objective value of \eqref{dualPmoda} at this solution is
 $\hat{\zeta}+\hat{\nu}_k$\,.  Therefore $\hat{\zeta}+\hat{\nu}_k$
 is a lower bound on the objective value of solutions of \ref{BNLP} having $x_k=0$. So, if $\hat{\zeta}+\hat{\nu}_k>{\rm UB}$, then
 no optimal solution of \ref{BNLP}  can have $x_k=0$. 
\end{proof}


\subsection{Strong fixing for 0/1 MINLO}

Theorem \ref{thm:fixBNLP} gives us a methodology to fix variables at 0 and 1  in \ref{BNLP}, considering the objective-function value of a feasible solution and \emph{any} feasible solution for the Lagrangian dual problem \ref{dualP} of its continuous relaxation. 
It is easy to see from Theorem \ref{thm:fixBNLP} that 
a better (i.e., lower) UB gives us the potential to fix more variables. 

In the \emph{strong-fixing} (SF) methodology
(see \cite{damcov}), for all $j\in N$, we solve the problem where we maximize the objective function of \ref{dualP} plus $\upsilon_j$ (resp., $\nu_k$), subject to $(\lambda,\mu,\upsilon,\nu)$ being a feasible solution to \ref{dualP}\,.
For each $j\in N$, if there is a feasible solution $(\hat\lambda,\hat\mu,\hat\upsilon,\hat\nu)$ to \ref{dualP} that can be used in Theorem \ref{thm:fixBNLP} to fix $x_j$ at 0 (resp., at 1), then  the optimal solution of this problem has objective value  greater than UB and can be used as well. 

We can see SF as following the philosophy expounded as far back as
the early 1980's in \cite[Sec.~3.3]{reducedcostfixing}:
``The fact that we are willing to abandon a
 search tree and restart the entire procedure with a reduced problem
 reflects our philosophy that we would rather do almost anything than
 perform branch and bound. Our philosophy is not a prejudice; it is based
 on our experience solving this type of problem.'' Still, there have to
 be some limits on the work involved in reducing, and the next section
 contains our proposal to get some of the power of SF, but with 
 work closer to that of RCF.


\subsection{Dual-path fixing for 0/1 MINLO}

We typically see the use of the optimal solution of \ref{dualP} in Theorem \ref{thm:fixBNLP}, especially when applied to a binary linear-optimization problem. Nevertheless, any feasible dual solution can be used. We therefore propose that a dual-feasible method be employed to solve the continuous relaxation of \ref{BNLP}, which generates a feasible dual solution in each iteration. Finally, we propose using all dual solutions found during the iterations of the method in Theorem \ref{thm:fixBNLP}, with the aim of creating new opportunities to fix variables without significant additional computational effort. We refer to this technique as \emph{dual-path fixing} (DPF).


\subsection{Particularization to pure-0/1 MILO}

Next, we particularize the methodologies above for the binary linear-optimization problem:
\begin{equation}
	\label{BLP}\tag{\mbox{BLP}}
		 \min  \{ c^\top x
	~:~ Ax  \geq  b,\quad Cx=d,\quad
        x  \in \{0,1\}^{n}\},
\end{equation}
where  $A\in\mathbb{R}^{m\times n}$ and $C\in\mathbb{R}^{p\times n}$. The
continuous relaxation of \ref{BLP} and its dual problem are
\begin{align}		 
&~\displaystyle\min_{x}  \{ c^\top x
	~:~ Ax  \geq  b,\quad Cx=d,\quad
        0\leq x\leq \mathbf{e}\},\label{relP-lp}\tag{$\widebar{\mbox{P}}_{\mbox{\tiny LP}}$}\\
&\displaystyle\max_{\lambda,\mu,\upsilon,\nu} \{  \lambda^\top b + \mu^\top d - \nu^\top \mathbf{e}~:~
		  \lambda^\top A + \mu^\top C + \upsilon^\top  - \nu^\top  = c^\top, \quad \lambda\geq 0, \quad \upsilon\geq 0,\quad \nu  \geq  0\}. \label{dualP-lp}\tag{$\widebar{\mbox{D}}_{\mbox{\tiny LP}}$}
\end{align}

Let $\mbox{UB}$ be the objective-function value of a feasible solution for \ref{BLP}, and  $(\hat{\lambda},\hat{\mu},\hat{\upsilon},\hat{\nu})$ be a feasible solution for   \ref{dualP-lp} with objective-function \hbox{value $\hat{\zeta}$}. 
Then, for every optimal solution $x^*$ for \ref{BLP}, and from Theorem \ref{thm:fixBNLP} we have:
\[
\begin{array}{ll}
x_k^*=0, ~ \forall  k\in N \mbox{ such that }  \mbox{UB}-\hat{\zeta} <  c_k - (\hat{\lambda}^\top A_{\cdot k} + \hat{\mu}^\top C_{\cdot k} -\hat{\nu}_k),\\
x_k^*=1, ~ \forall  k\in N \mbox{ such that }  \mbox{UB}-\hat{\zeta} <  \hat{\lambda}^\top A_{\cdot k} + \hat{\mu}^\top C_{\cdot k} +\hat{\upsilon}_k - c_k~,
\end{array}
\]
where we use the equality constraint in \ref{dualP-lp} to extract the values of $\hat{\upsilon}_k$ and $\hat{\nu}_k$\,. Equivalently, we have:
\begin{align}
&\!\!\!\!\!\!x_k^*\!\leq \!\displaystyle \left\lfloor \frac{\mbox{UB}-\hat{\zeta}}{c_k -(\hat{\lambda}^\top A_{\cdot k} + \hat{\mu}^\top C_{\cdot k} -\hat{\nu}_k)}\right\rfloor, ~ \forall  k\in N \mbox{ such that }   c_k -  (\hat{\lambda}^\top A_{\cdot k} + \hat{\mu}^\top C_{\cdot k} -\hat{\nu}_k)\! > \!0,\label{fixlp0}\\
&\!\!\!\!\!\!x_k^*\!\geq \!1 \!-\! \displaystyle \left\lfloor \frac{\mbox{UB}-\hat{\zeta}}{\hat{\lambda}^\top A_{\cdot k} + \hat{\mu}^\top C_{\cdot k} +\hat{\upsilon}_k -c_k}\right\rfloor, ~ \forall  k\in N \mbox{ such that }     c_k -  (\hat{\lambda}^\top A_{\cdot k} + \hat{\mu}^\top C_{\cdot k} +\hat{\upsilon}_k)\!<\!0.\label{fixlp1}
\end{align}

\begin{remark}
If $(\hat{\lambda},\hat{\mu},\hat{\upsilon},\hat{\nu})$ is an optimal solution for \ref{dualP-lp}\,, from complementary slackness, we cannot simultaneously have  $\hat\upsilon_k>0$ and $\hat{\nu}_k>0$. Therefore, in this case, \eqref{fixlp0} and \eqref{fixlp1}  reduce, respectively,  to: 
\begin{align}
&x_k^*\leq \displaystyle \lfloor
(\mbox{UB}-\hat{\zeta})/\bar{c}_k)
\rfloor, 
~ \forall  k\in N \mbox{ such that }   \bar{c}_k > 0,\label{ug0}\\
&x_k^*\geq 1 - \displaystyle \lfloor 
-(\mbox{UB}-\hat{\zeta})/\bar{c}_k)
\rfloor, ~ \forall  k\in N \mbox{ such that }     \bar{c}_k < 0,\label{vg0}
\end{align}
where $\bar{c}_k:= c_k -  (\hat{\lambda}^\top A_{\cdot k} + \hat{\mu}^\top C_{\cdot k})$ is the reduced cost associated to the variable $x_k$ in the continuous relaxation of \ref{BLP}, for all $k\in N$. Note that, in this case, \eqref{ug0} corresponds to the case where $\hat{\upsilon}_k=\bar{c}_k>0$ and $\hat{\nu}_k=0$, and \eqref{vg0} corresponds to the case where $\hat{\nu}_k=-\bar{c}_k>0$ and $\hat{\upsilon}_k=0$. The methodology for fixing variables based in Theorem \ref{thm:fixBNLP}, using an optimal solution of \ref{dualP-lp}\,, is well known as reduced-cost fixing (RCF).

It is common for the constraint $x\leq \mathbf{e}$ to be redundant in \ref{relP-lp} and to be omitted. In this case, there is no dual variable $\nu$ in \ref{dualP-lp}, and \eqref{fixlp0} reduces to \eqref{ug0} for any feasible dual solution.
\end{remark}


\subsubsection{Strong fixing for pure-0/1 MILO}\label{subsec:SFMILO}

When considering the particular case of the binary linear-optimization problem \ref{BLP}, the SF methodology requires the solution of the following problems for each $j\in N$. 
\begin{align}
&	\begin{array}{llll}
		& \displaystyle\max_{\lambda,\mu,\upsilon,\nu}  &  c_j - ({\lambda}^\top A_{\cdot j} + {\mu}^\top C_{\cdot j} -{\nu}_j) +  \lambda^\top b + \mu^\top d - \nu^\top \mathbf{e}  \\
		& \mbox{s.\ t.} \; &   \lambda^\top A + \mu^\top C + \upsilon^\top  - \nu^\top  = c^\top, \\
        &&\lambda\geq 0,~\upsilon\geq 0,~\nu  \geq  0.
	\end{array}\label{fjlp0}\tag{F$_j^{\mbox{\tiny BLP(0)}}$}\\
&	\begin{array}{llll}
		& \displaystyle\max_{\lambda,\mu,\upsilon,\nu}  &  {\lambda}^\top A_{\cdot j} + {\mu}^\top C_{\cdot j} +{\upsilon}_j - c_j +  \lambda^\top b + \mu^\top d - \nu^\top \mathbf{e}  \\
		& \mbox{s.\ t.} \; &   \lambda^\top A + \mu^\top C + \upsilon^\top  - \nu^\top  = c^\top, \\
        &&\lambda\geq 0,~\upsilon\geq 0,~\nu  \geq  0.
	\end{array}\label{fjlp1}\tag{F$_j^{\mbox{\tiny BLP(1)}}$}
\end{align}
For a given $j\in N$, we should have that the right-hand side in \eqref{fixlp0} equal to 0 to be able to fix the variable $x_j$ at 0 in \ref{BLP}. Equivalently, we should have the  optimal value of \ref{fjlp0} greater than UB  to be able to fix the variable $x_j$ at 0 in \ref{BLP} using the optimal solution of \ref{fjlp0} in Theorem \ref{thm:fixBNLP}.

Analogously, for a given $j\in N$, we should have that the right-hand side in \eqref{fixlp1} equal to 1 to be able to fix the variable $x_j$ at 1 in \ref{BLP}. Equivalently, we should have the  optimal value of \ref{fjlp1} greater than UB  to be able to fix the variable $x_j$ at 1 in \ref{BLP} using the optimal solution of \ref{fjlp1} in Theorem \ref{thm:fixBNLP}.


\subsubsection{Dual-path fixing for 0/1 MILO}

DPF particularizes to MILO depending on 
the algorithm used to solve continuous relaxations.
A very natural choice for us is the dual-simplex method,
which is commonly and naturally applied by widely-used codes within both cutting-plane and B\&B algorithms. But additionally, for example, interior-point methods
can be applied to the duals of continuous relaxations, as was done in a cutting-plane approach for graph matching formulations; see \cite{Mitchell2,Mitchell1}.


\section{Set-covering MILO}\label{sec:SetCover}

We decided to make a detailed study of our ideas 
for a class of difficult and well-motivated set of \ref{BLP}
instances.
In the set-covering problem, we consider a set of elements and a collection of subsets of these elements, each subset having an associated cost. The objective is to select subsets whose total cost is to be minimized and that together cover (i.e., contain) all elements of the original set (see, e.g., \cite{Balas1980b}). The set-covering problem is a classic combinatorial-optimization problem, fundamental to computational-complexity theory and with applications in a wide variety of areas. The decision version belongs to the class of NP-complete problems, as 
is well known and was demonstrated in Karp's monumental paper \cite{Karp72}.

The set-covering problem can be formally defined as follows. Let $w$ be a positive real $n$-dimensional vector and $A$ be an $m \times n$ 0/1-valued matrix. Let $M := {1,2,\ldots,m}$ and $N := {1,2,\ldots,n}$. The value $w_j$ represents the cost of column $j$ of $A$, for $j \in N$. We say that column $j \in N$ covers row $i \in M$ of $A$ if the entry $(i,j)$ of $A$ is equal to 1. The set-covering problem seeks a subset $S \subseteq N$ of columns of minimum cost such that each row $i \in M$ is covered by at least one column $j \in S$. It is naturally formulated as the following 0/1 linear-optimization problem:
\begin{equation}\label{scp}\tag{SCP}
\min\{w^\top z : Az \geq \mathbf{e},\ z \in \{0,1\}^n\}.
\end{equation}
For the variable, we use $z$ rather than $x$ to be consistent with 
\cite{damcov}.

With the objective of finding the optimal solution of \ref{scp}, the B\&B algorithm, first introduced by \cite{Land1960} for mixed-integer linear optimization, is one of the most used and fundamental exact methods; see \cite[Sec.~II.4]{NWbook} for a modern presentation.
The success of the algorithm depends on the computation of good lower and upper bounds for {\rm\ref{scp}}; the former can be obtained by solving the continuous relaxation of \ref{scp}, and the latter is given by a feasible solution to the problem.

In the case of \ref{scp},  its continuous relaxation \ref{relSCP} and respective dual problem \ref{d} can be formulated as:
\begin{align}
&\min\{w^\top z : Az \geq \mathbf{e},\ z \geq 0\}, \label{relSCP}\tag{P}\\
&\max\{u^\top \mathbf{e} : u^\top A \leq w^\top,\ u \geq 0\}.\label{d}\tag{D}
\end{align}
As is common for set-covering formulations, 
we relax $z\in \{0,1\}^n$ to
$z\geq 0$, because the ``objective-function pressure''
related to the positivity of $w$,
together with the $Az\geq \mathbf{e}$ constraints,
and the nonnegativity of $A$ 
ensure that no variable would ever be greater that
1 in an optimal solution of either \ref{scp} or
\ref{relSCP}. 


\subsection{Variable fixing for set covering}

\noindent {\bf Reduced-cost fixing for set covering.}
The RCF technique is based on Theorem \ref{thm:fix_dopt}, which is derived from \eqref{ug0}.

\begin{theorem}\label{thm:fix_dopt}
Let {\rm UB} be the objective function value for a feasible solution to {\rm\ref{scp}}, and let $u^\star$ be 
an optimal solution to {\rm\ref{d}}. For all $j\in N$, consider the reduced cost associated to variable $x_j$ of {\rm\ref{relSCP}}, defined as $\bar{w}_j:=w_j- {u^\star}^\top A_{\cdot j}$\,. For every optimal solution $z^\star$ to {\rm\ref{scp}}, we have:
\begin{align}
&z_j^\star \leq \left\lfloor
({\rm UB}-{u^\star}^\top\mathbf{e})/\bar{w}_j 
\right\rfloor ,\quad \forall j \in N \text{ such that } \bar{w}_j
> 0.\label{desigualdades_de_fixacao}
\end{align}
\end{theorem}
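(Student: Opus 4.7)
The plan is to recognize Theorem \ref{thm:fix_dopt} as a direct specialization of the Remark following the general MILO fixing result, obtained by matching \ref{scp} to the template \ref{BLP} and then invoking \eqref{ug0}.

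First I would identify the correspondence: \ref{scp} is an instance of \ref{BLP} with cost vector $c := w$, coefficient matrix $A$ as given, right-hand side $b := \mathbf{e}$, no equality constraints (so $C$ and $d$ are absent and the multiplier $\mu$ disappears), and with the upper-bound constraint $z \leq \mathbf{e}$ omitted from \ref{relSCP} as justified in the paragraph following \ref{relSCP}–\ref{d}. Because the $z \leq \mathbf{e}$ constraints are dropped, the dual variable $\nu$ does not appear in \ref{d}, so the only dual variable is $u$, playing the role of $\lambda$.

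Next I would verify that \ref{d} is exactly the specialization of \ref{dualP-lp} under this dictionary, and that for an optimal dual solution $u^\star$, the quantity $\hat{\zeta}$ in the general framework equals $u^{\star\top}\mathbf{e}$, while the reduced cost $\bar c_k$ of the Remark becomes $\bar w_j = w_j - u^{\star\top} A_{\cdot j}$. Since $u^\star$ is optimal and the upper-bound constraints are absent, the final sentence of the Remark applies and \eqref{fixlp0} reduces directly to \eqref{ug0} with these substitutions; plugging in gives exactly
\[
z_j^\star \leq \left\lfloor (\mathrm{UB} - u^{\star\top}\mathbf{e})/\bar w_j \right\rfloor
\]
whenever $\bar w_j > 0$, which is the claimed bound.

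There is no real obstacle here; the only care required is bookkeeping, namely to check that (i) omitting $z \leq \mathbf{e}$ in \ref{relSCP} is legitimate (already argued in the paper from nonnegativity of $w$ and $A$ together with $Az \geq \mathbf{e}$) so that the absence of $\nu$ is harmless, and (ii) optimality of $u^\star$ is what allows us to write the bound in the clean single-term form via complementary slackness, rather than in the more general two-sided form \eqref{fixlp0}. Once these alignments are in place, Theorem \ref{thm:fix_dopt} follows by citing Theorem \ref{thm:fixBNLP} (through the Remark) with no further computation.
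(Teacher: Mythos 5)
Your proposal is correct and follows essentially the same route as the paper, which derives Theorem \ref{thm:fix_dopt} directly from \eqref{ug0} by specializing \ref{BLP} to \ref{scp} with $c:=w$, $b:=\mathbf{e}$, no equality constraints, and the upper bounds $z\le\mathbf{e}$ omitted. One small correction to your point (ii): the clean single-term form is not a consequence of complementary slackness at optimality, but simply of the omission of $z\le\mathbf{e}$ (so $\nu$ is absent and \eqref{fixlp0} reduces to \eqref{ug0} for \emph{any} dual-feasible $u$), which is precisely why the paper remarks immediately after the theorem that the bound remains valid for any feasible solution to \ref{d}, not only an optimal one.
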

For $j \in N$, the right-hand side of \eqref{desigualdades_de_fixacao} (a nonnegative integer) must be equal to 0 in order to fix $z_j$ to 0
in \ref{scp}.

We note that Theorem \ref{thm:fix_dopt}, for reduced-cost fixing,  considers the specific use of the optimal solution to \ref{d}.  However, as we omit $z\leq\mathbf{e}$ in \ref{relSCP}, the result of the theorem applies when considering any  feasible solution to \ref{d} (see last comment before  \S\ref{subsec:SFMILO}). 

\medskip
\noindent {\bf Dual-path fixing for set covering.}
For DPF applied to \ref{scp}, we employ the dual-simplex algorithm to \ref{relSCP} (equivalently, the primal-simplex algorithm on \ref{d}), and we use the entire sequence of dual feasible solutions generated by the algorithm to potentially fix variables via Theorem \ref{thm:fix_dopt}. 

\medskip
\noindent {\bf Strong fixing  for set covering.}
The SF technique for \ref{scp} was considered in \cite{damcov} and leads to the greatest number of fixed variables when considering any dual-feasible solution in Theorem \ref{thm:fix_dopt}.
For a given $j\in N$, we should have that the right-hand side in \eqref{desigualdades_de_fixacao} equal to 0 to be able to fix the variable $z_j$ at 0 in \ref{scp}. Equivalently, we should have
$
 w_j+ \hat{u}^{\star\top}(\mathbf{e} -  A_{\cdot j})> UB\,.
$
Once more, we observe that any feasible solution $\hat{u}$ for \ref{d} can be used in \eqref{desigualdades_de_fixacao}. Then, for all $j\in N$, the SF technique employs the solution of 
\begin{align}
    \mathfrak{z}_{j}^{\mbox{\tiny SCP(0)}}:=w_j+\max\{u^\top (\mathbf{e} -  A_{\cdot j})~:~ u^\top A\leq  w^\top,~ 
u\geq 0\}. \label{fj}\tag{F$_j^{\mbox{\tiny SCP(0)}}$}
\end{align}

Note that, for each $j\in N$, if there is a feasible solution $\hat{u}$ to \ref{d} that can be used in \eqref{desigualdades_de_fixacao} to fix $z_j$ at 0, then  the optimal solution of \ref{fj} has objective value $\mathfrak{z}_{j}^{\mbox{\tiny SCP(0)}}$ 
greater than UB and can be used as well. 
In the SF technique, for $j=1,2,\ldots,n$, we iteratively solve equation \ref{fj} if variable $j$ has not yet been fixed (using the solutions of \ref{fj} from  previous iterations) and, considering its solution in Theorem \ref{thm:fix_dopt}, we fix all variables that are possible 
to be fixed.


\subsection{Dominated row elimination (DRE) for set covering}

For general MILO instances, preprocessors seek to remove 
redundant constraints (see, for example, \cite[Sec.~1.1]{Savelsbergh}). And after variable fixing, again 
redundant constraints may arise. For set covering, 
a particularly simple kind of redundancy commonly arises, 
and there is a standard and simple way of handling it
which we now describe.

We say that a row $\hat \imath$ \emph{dominates} another row $i$ if every set that covers $\hat\imath$ also covers $i$, i.e., if $A_{\hat\imath j}\le A_{ij}$ for all $j\in N$.
If row $i$ is dominated by row $\hat\imath $, then any solution that covers $\hat\imath $ will automatically cover $i$.  
 Thus, row $i$ can be removed from the problem.

So, for \ref{scp}, we consider the  following dominated-row elimination procedure  (DRE), that we iteratively  apply to a given set-covering constraint matrix $A$ until no row is eliminated.
\begin{enumerate}[label=\Roman*]
    \item Eliminate dominated rows (see, for example, \cite[Sec.~3.2(1)]{BEASLEY198785}). 
    \item If a row is covered by only one column, fix the variable corresponding to that column at 1 and eliminate all rows that were covered by the column (see, e.g., \cite[Sec.~3.1(4)]{BEASLEY198785}).
\end{enumerate}


\subsection{Numerical experiments for set covering}\label{sec:numexp}

To investigate the effectiveness of the  dual-path fixing (DPF) strategy proposed in this work, when applied to instances of \ref{scp}, we compare it with the well-known reduced-cost fixing (RCF) strategy and with the strong fixing (SF) strategy. Although SF is impractical due to its high computational cost, it provides an ideal benchmark 
by yielding the maximum number of fixed variables by applying the result of Theorem \ref{thm:fixBNLP}. To further evaluate the impact of variable fixing on the emergence of dominated rows, we apply the dominated row elimination (DRE) procedure described in the previous section after each variable-fixing strategy.

Specifically, we compare the procedures described in the following, all aimed at  reducing the size of the instances of \ref{scp}. We assume that the input matrices $A$ of the instances considered initially contain no dominated rows.


 \subsubsection{Procedures evaluated}\label{sec:procedures}
\begin{itemize}
\item RCF+DRE: we apply RCF and  then, if we can fix variables, we apply DRE. 
\item DPF+DRE: we apply DPF and then, if we can fix variables, we apply DRE (we apply DRE only once, after fixing all possible variables, using all dual feasible solutions obtained). 
\item $\mathcal{I}$(RCF+DRE): we iteratively apply RCF+DRE until no variable can be fixed.
\item $\mathcal{I}$(DPF+DRE): we iteratively apply DPF+DRE until no variable can be fixed.
\item SF+DRE: we apply SF and then, if we can fix variables, we apply DRE   (we apply DRE only once, after fixing all possible variables using the solutions of all problems \ref{fj} solved. Note that all \ref{fj} have exactly the same set of variables and constraints.
\end{itemize}
For all procedures described above, except SF+DRE, we have directly solved \ref{d} with the primal-simplex method, initializing the algorithm with the slack variables in the basis, i.e., with the solution $u:=0$.

State-of-the-art MILO solvers (e.g., Gurobi) tend
to compartmentalize the linear-opt\-i\-mization (usually simplex-algorithm based) subsolver within the overall MILO 
B\&B (or more generally, branch-and cut) algorithm. 
Because of this, it is not easy for the 
MILO B\&B algorithm to gain access to all dual solutions 
from the iterates of simplex-algorithm
subsolver (which is what we want 
for dual-path fixing). Because of this, we decided to do some 
experiments aimed at investigating the \emph{potential}
for our ideas, were they to be implemented in an 
efficient manner in a state-of-the-art MILO solver. 
For our experiments, we used the well-known open-source 
GLPK (GNU Linear Programming Kit) version 5.0, an ANSI C  callable library 
for linear optimization (and more), distributed
under the GNU GENERAL PUBLIC LICENSE, Version 3, 29 June 2007; see \cite{glpk}. But our ambitious ultimate goal is to provoke the (commercial and open source) solver developers 
to more tightly integrate their (convex) subsolvers (LP, QP, NLP, SOCP, SDP) with their B\&B and branch-and-cut solvers, and then implement our ideas.

We ran the  experiments on a 14-inch MacBook Pro equipped with an Apple M3 Pro chip (11-core CPU), 18 GB of unified memory, and running macOS Tahoe 26.2.

We found it to to be convenient to
apply the primal-simplex algorithm to \ref{d}
(rather than the dual-simplex algorithm to \ref{relSCP}),
and to always start with an all-slack basis, 
which is always feasible because meaningful instances of 
\ref{scp} have positive $w$.
Starting with a consistently pre-defined initial
basis is very convenient for reproducability
of our experiments, and ensures more consistent comparisons across procedures. 


\subsubsection{Test instances}

We tested our procedures on 35 instances from the standard non-unicost set-covering test problems in steps 4--6 and A--D from \cite{Balas1980b,BEASLEY198785}, available in the  OR-Library \cite{Beasley1990ORLibrarySCP}. From the results reported below, we excluded 10 instances from sets 4 and 5 (out of 20 total)  because all variables in these instances were already fixed by RCF, leaving no room for improvement with the DPF strategy. We note that the cost vector $w$ is nonnegative for all instances.

We also generated 25 instances as described in \cite[Sec.~5.1]{damcov}. These are instances of a set-covering problem that models a problem of determining  an optimal location for safety-landing-sites (SLSs) for commercial electric Vertical Take-Off and Landing (eVTOLs) vehicles transporting passengers in large cities. The objective is to install the set of SLSs at the lowest possible cost, such that all points in a given air-transportation network are covered by at least one of them. The input data needed to generate each instance are the number $n$ of candidate locations for the SLSs, the number of nodes $\nu$ on the graph that models the air-transportation network, and the interval $[R_{\min},R_{\max}]$ in which the randomly generated covering radii for the SLSs must be.  These covering radii determine the maximum allowed distance between the SLSs and points in the graph that are covered by them. A detailed description of how to construct the input constraint matrices $A$ for the \ref{scp} instances can be found in Algorithm 1 of \cite[Sec.~5.1]{damcov}. For our instances, we define $R_{\min}=0.11$, $R_{\max}=0.19$, and, following \cite{damcov}, we define $\nu=0.3 n$, with $n=500, ~1000, ~1500$.

To fully investigate the effectiveness of the fixing strategies, the upper bounds UB used for fixing variables is always set to the best objective value reported for the Beasley instances and to the optimal value for the SLS instances.


\subsubsection{Results}\label{sec:numres}  

In Figures \ref{fig:reduction_Beasleya}--\ref{fig:reduction_SLS}, we show the average percentage reduction in the number of variables for the Beasley and SLS instances. These averages are computed over each set of instances of the same size and for all procedures described in \S\ref{sec:procedures}. 

We observe that RCF is already highly effective across all instance sets, yet DPF further increases the number of variables fixed compared with RCF in every case. We note that the average improvement of 1.4\% for SLS instances with $n = 1500$ corresponds to 21 additional variables fixed by DPF at no significant extra cost. 

Figure \ref{fig:reduction_Beasleya} shows that for Beasley’s instances in sets 4 and 5 there is a consistent increase in variables fixed across the procedures from left to right, and the effectiveness of $\mathcal{I}$(DPF) is supported by its results being close to SF. For set 6,  at the plotted precision, iterations of the iterative procedures produce  no measurable improvement on average; importantly, in this case, a single application of DPF (one simplex run) fixes more variables than $\mathcal{I}$(RCF).

Figure \ref{fig:reduction_Beasleyb} shows that, for Beasley’s larger instances in sets A--D, RCF is even more effective than in the previous sets, leaving less room for improvement by DPF and the iterative procedures. Nevertheless, DPF is still able to increase the number of fixed variables on average for all sets, bringing this number closer to that achieved by SF. Regarding the iterative procedures,  improvements beyond the first iteration are observed only for set A; for the remaining sets, a single application of DPF is more effective than the more computationally expensive iterative application of RCF.

Figure \ref{fig:reduction_SLS} shows  that RCF is less effective for the SLS instances than for the Beasley instances. The number of fixed variable increases for every method from left to the right. Moreover, the iterative algorithms have a substantial impact on the smaller instances, where they led to a notable increase in the number of fixed variables.

Tables \ref{tab:beasley-preprocessing-comparison} and \ref{tab:sls-preprocessing-comparison}, in the Appendix, report for each instance in the Beasley and SLS test sets, respectively, the number of variables $n$ and constraints $m$ obtained after applying  each procedure described in \S\ref{sec:numres}. For the iterative procedures $\mathcal{I}$(RCF+DRE) and $\mathcal{I}$(DPF+DRE), the number of executed  iterations is also reported in brackets. The average results shown in Figures \ref{fig:reduction_Beasleya}--\ref{fig:reduction_SLS} are derived from the data in these tables. 

Figure \ref{fig:simplex_iterations} presents results for an SLS instance originally with $n=1000$ and $m=7585$,  for which both RCF and DPF are highly effective. For this instance, RCF already fixes 831 out of 1000 variables, yet DPF is able to fix an additional 14 variables.  Figure \ref{fig:simplex_iterations} shows the cumulative number of variables fixed by DPF during the execution of the simplex method to solve the dual problem \ref{d}, where 313 iterations were executed, generating 313 candidate dual-feasible solutions to fix variables according to Theorem \ref{thm:fixBNLP}. We note that the first variable is fixed early in the simplex execution, specifically at iteration 32. By iteration 155, 50\% of the total 845 variables fixed by DPF have already been determined. Furthermore, at iteration 241, DPF fixes the first of the 14 variables that cannot be fixed by RCF.  

An important advantage of DPF becomes clear when it is used in combination  with strong-branching in B\&B. Even if the simplex method is interrupted before full convergence — as is done when performing strong branching — the cumulative fixing plot provides a lower bound on the number of variables that will ultimately be fixed for a given branching decision. For example, if the plot in Figure \ref{fig:simplex_iterations} corresponds to a specific subproblem generated by a particular branching strategy, interrupting the simplex method after 200 iterations already guarantees that at least 600 variables will be fixed by DPF for that branching decision, providing critical early insight for the B\&B process. 

Finally, we note that the dual gap at iteration 200 is already very small, so the substantial number of variables fixed from this iteration is due  to the diversity of dual solutions considered, rather than simply a large decrease in the dual gap.

Figures \ref{fig:B_46}--\ref{fig:SLS} show average results for each set of test instances, concerning the  application of  DPF. Similarly to Figure \ref{fig:simplex_iterations}, they display, on the left vertical axis, the average percentage of cumulative fixed variables and, on the right vertical axis,  the average percentage of the initial gap. The horizontal axis represent the percentage of the total number of executed iterations. The conclusions drawn from these figures are analogous to those derived from Figure \ref{fig:simplex_iterations}. We observe that variables begin to be fixed in the early iterations of the simplex method, and also that even after the gaps have been substantially reduced, a large number of variables continue to be fixed using the different dual-feasible solutions obtained throughout the simplex iterations.

Figure \ref{fig:numSimplexit} reports the number of simplex iterations performed at each iteration of $\mathcal{I}$(DPF) for the same instance considered in Figures \ref{fig:simplex_iterations} and  \ref{fig:iterative_methods}. By comparing Figures \ref{fig:iterative_methods} and \ref{fig:numSimplexit}, we observe that as the iterations of $\mathcal{I}$(DPF) progress, the size of the problem solved decreases and, consequently, the number of simplex iterations is reduced. Nevertheless, Figure \ref{fig:iterative_methods} shows that DPF fixes more variables than RCF at every iteration of their respective iterative procedures until $\mathcal{I}$(DPF) converges. 

Figure \ref{fig:iterative_methods} compares the number of variables fixed at each iteration by the iterative methods $\mathcal{I}$(RCF+DRE) and $\mathcal{I}$(DPF+DRE). The instance  considered is the same as in Figure \ref{fig:simplex_iterations}. The comparison highlights several advantages of $\mathcal{I}$(DPF+DRE) over $\mathcal{I}$(RCF+DRE) when applied to this instance. From the  first iteration, $\mathcal{I}$(DPF+DRE) fixes more variables than $\mathcal{I}$(RCF+DRE), and this difference grows as the iterations progress. Ultimately, $\mathcal{I}$(DPF+DRE) requires fewer iterations to converge while fixing a significantly larger number of variables, even when compared with $\mathcal{I}$(RCF+DRE) after completing more iterations.  The horizontal line at 14 represents the number of variables fixed by SF and serves as a benchmark for how closely each  each iterative procedure approaches the ideal number of fixed variables.

\FloatBarrier

\begin{figure}[!ht]%
    \centering
    {{\includegraphics[width=0.99\textwidth]{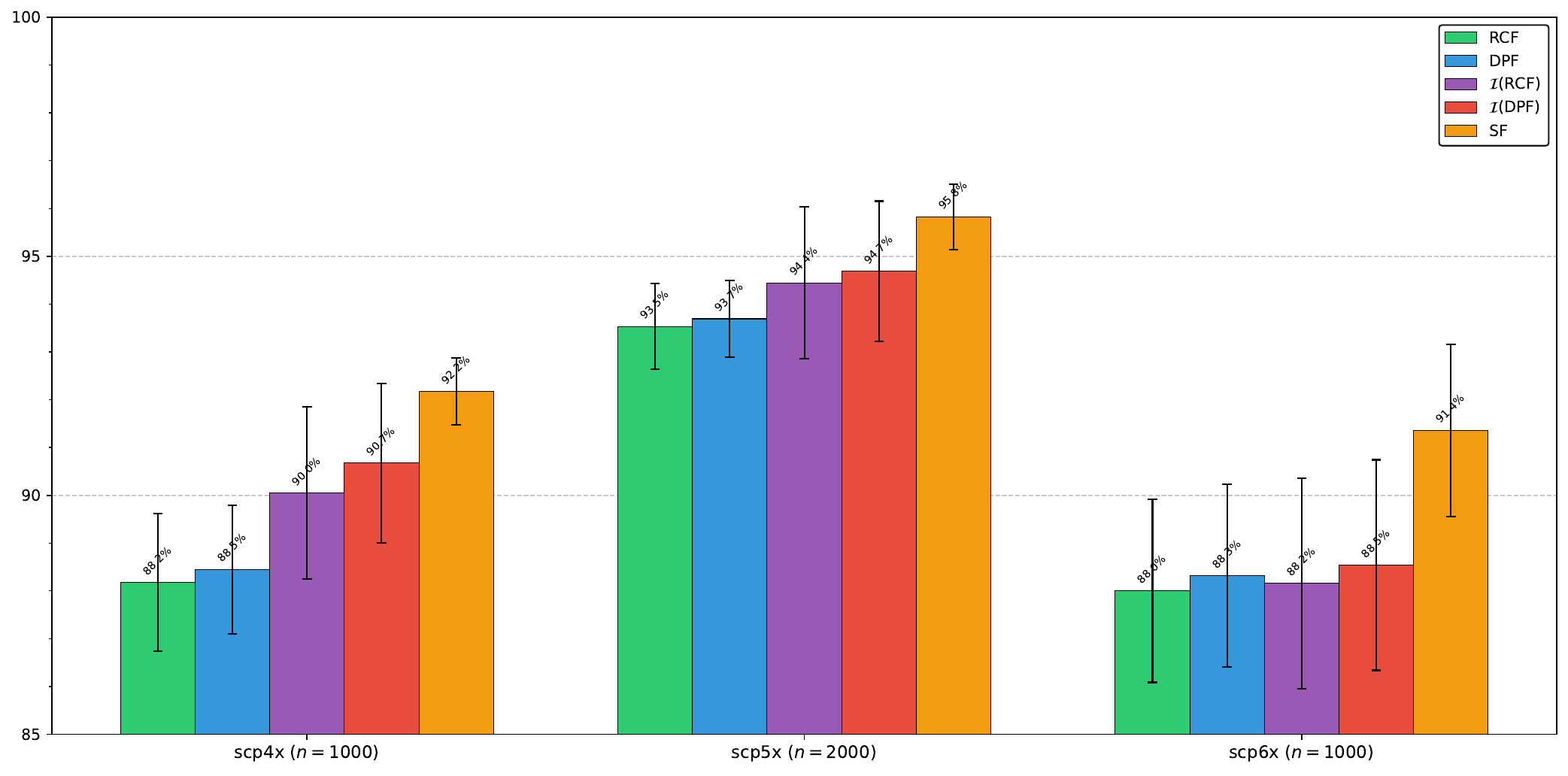} }}%
     \caption{Average percentage  reduction in the number of variables (Beasley instances - sets 4--6)}%
    \label{fig:reduction_Beasleya}%
\end{figure}

\begin{figure}[!ht]%
    \centering
    {{\includegraphics[width=0.99\textwidth]{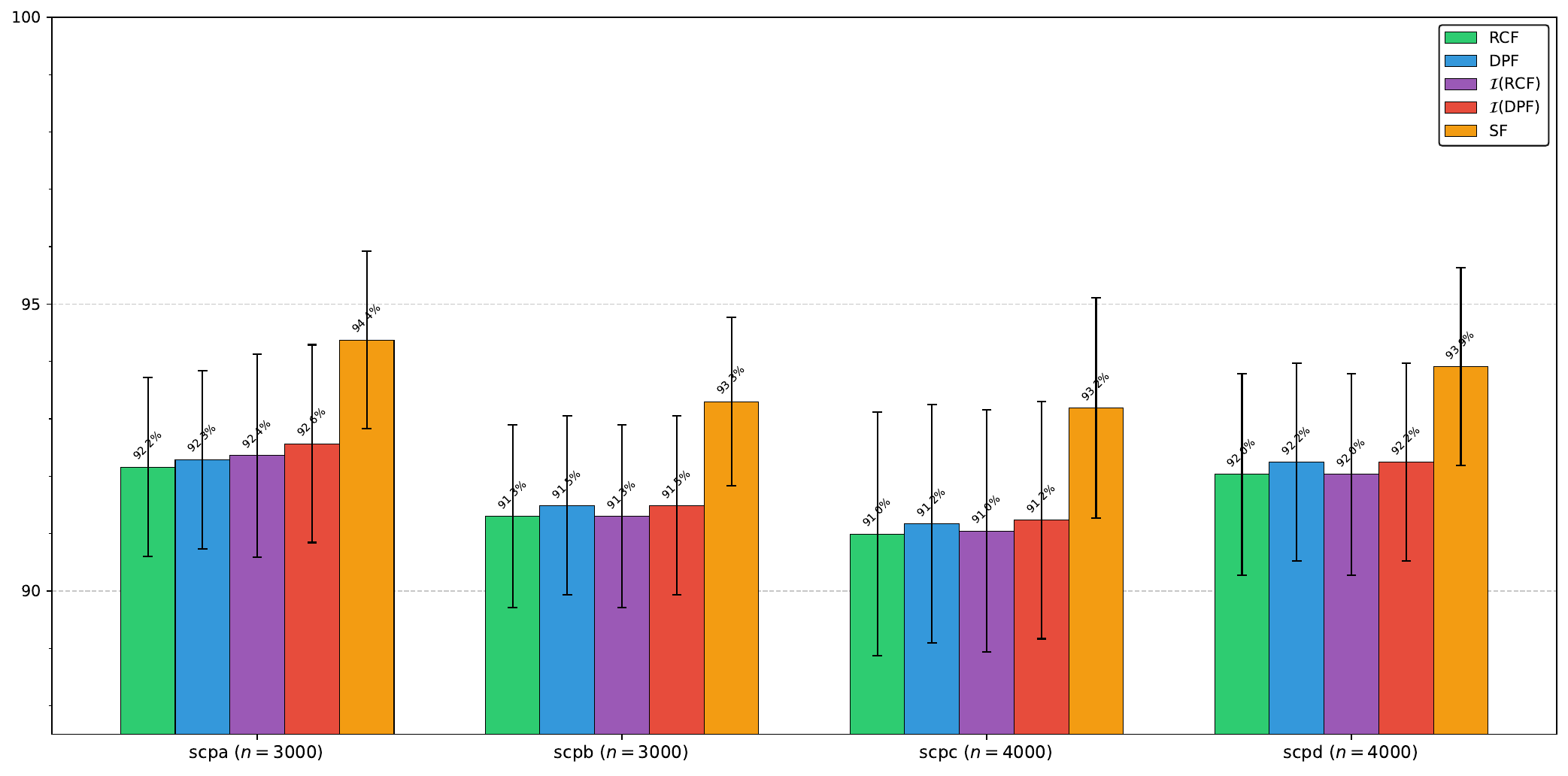} }}%
     \caption{Average percentage  reduction in the number of variables (Beasley instances - sets A--D)}%
    \label{fig:reduction_Beasleyb}%
\end{figure}

\begin{figure}[!ht]%
    \centering
    {{\includegraphics[width=1.01\textwidth]{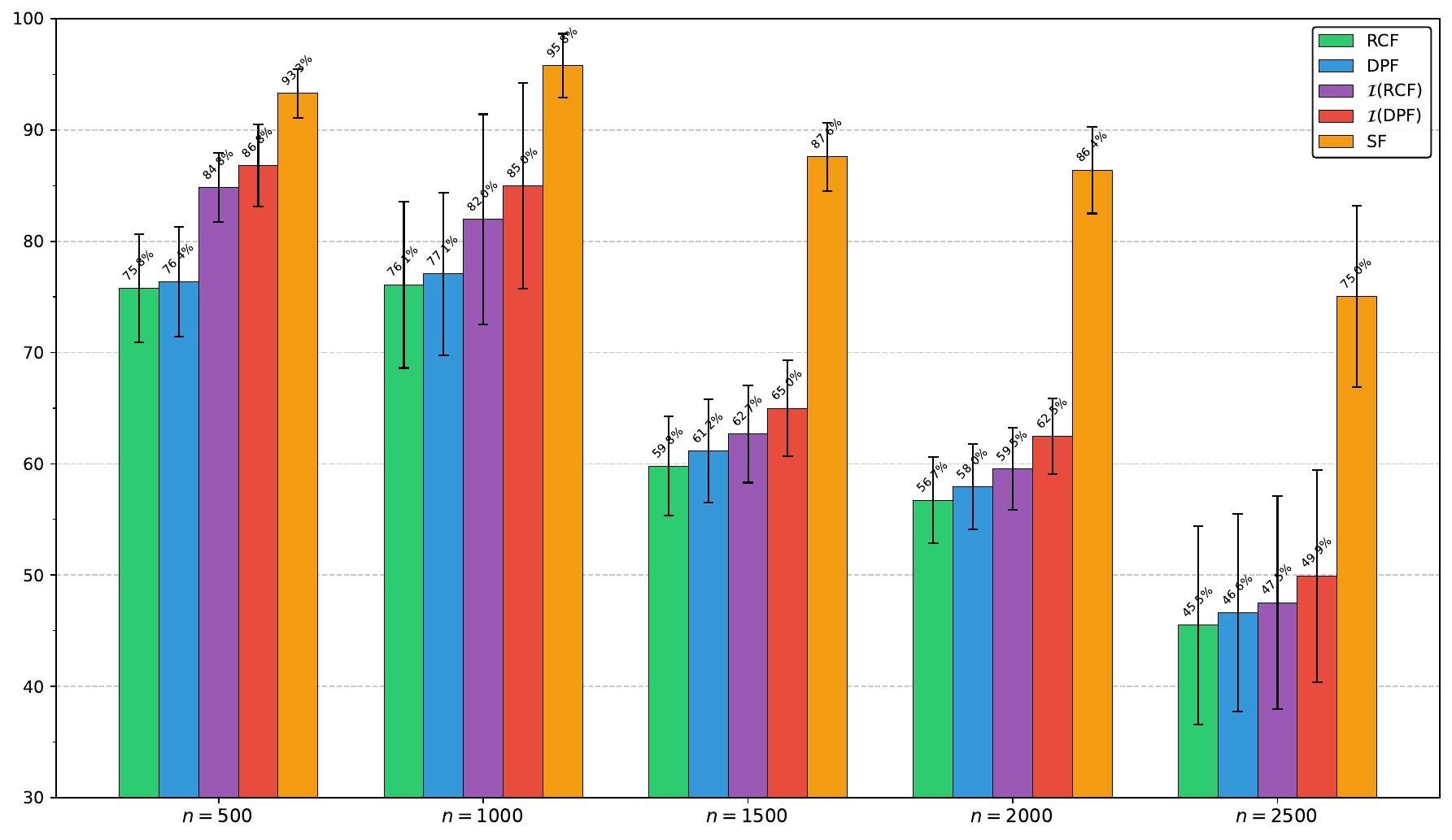} }}%
     \caption{Average percentage reduction in the number of variables (SLS instances)}%
    \label{fig:reduction_SLS}%
\end{figure}

\begin{figure}[!ht]%
    \centering
    {{\includegraphics[width=1.01\textwidth]{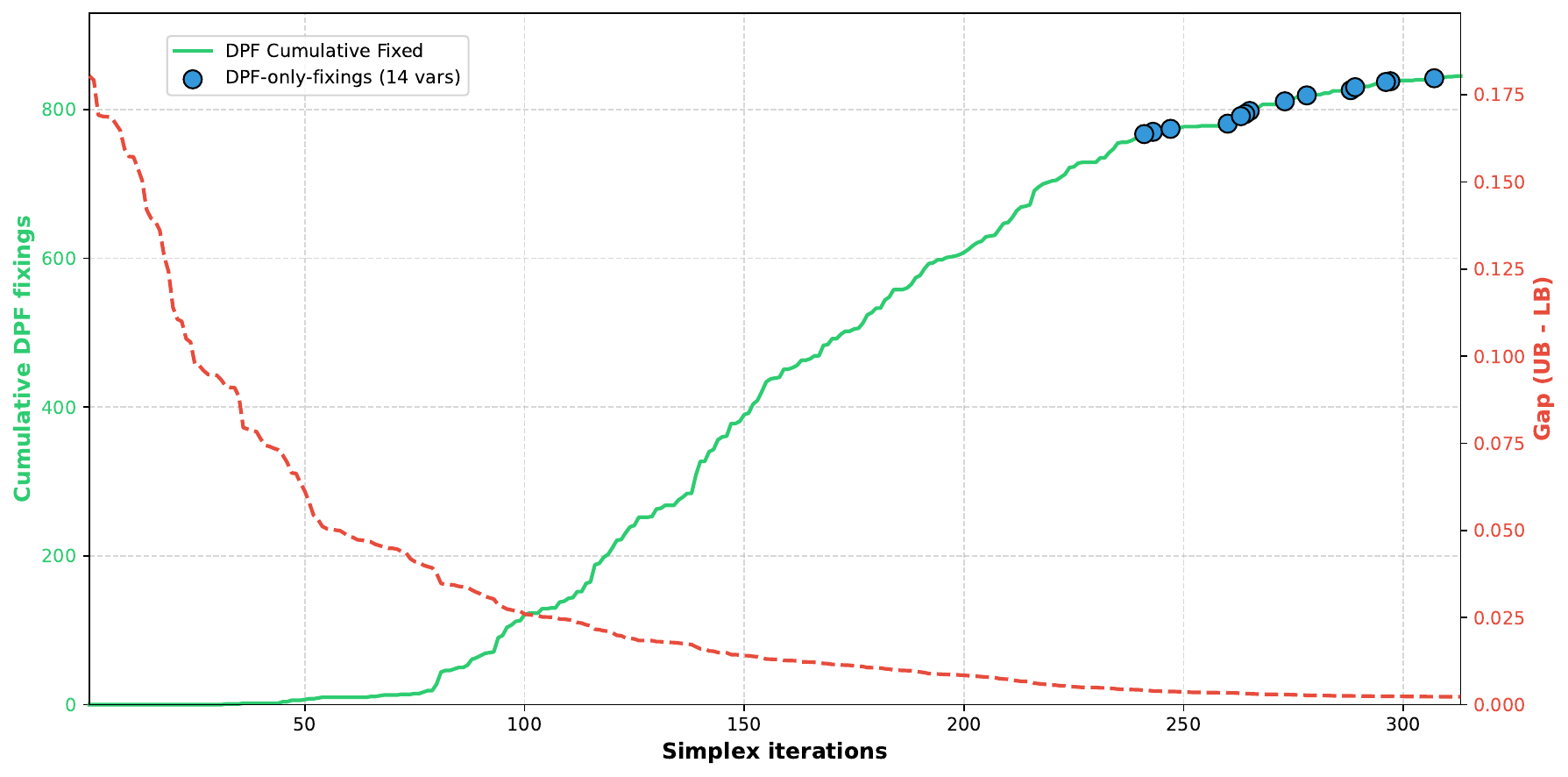} }}%
     \caption{DPF behavior per simplex iteration - SLS instance originally with $n=1000, m=7585$ }%
    \label{fig:simplex_iterations}%
\end{figure}

\begin{figure}[!ht]%
    \centering
    {{\includegraphics[width=1.01\textwidth]{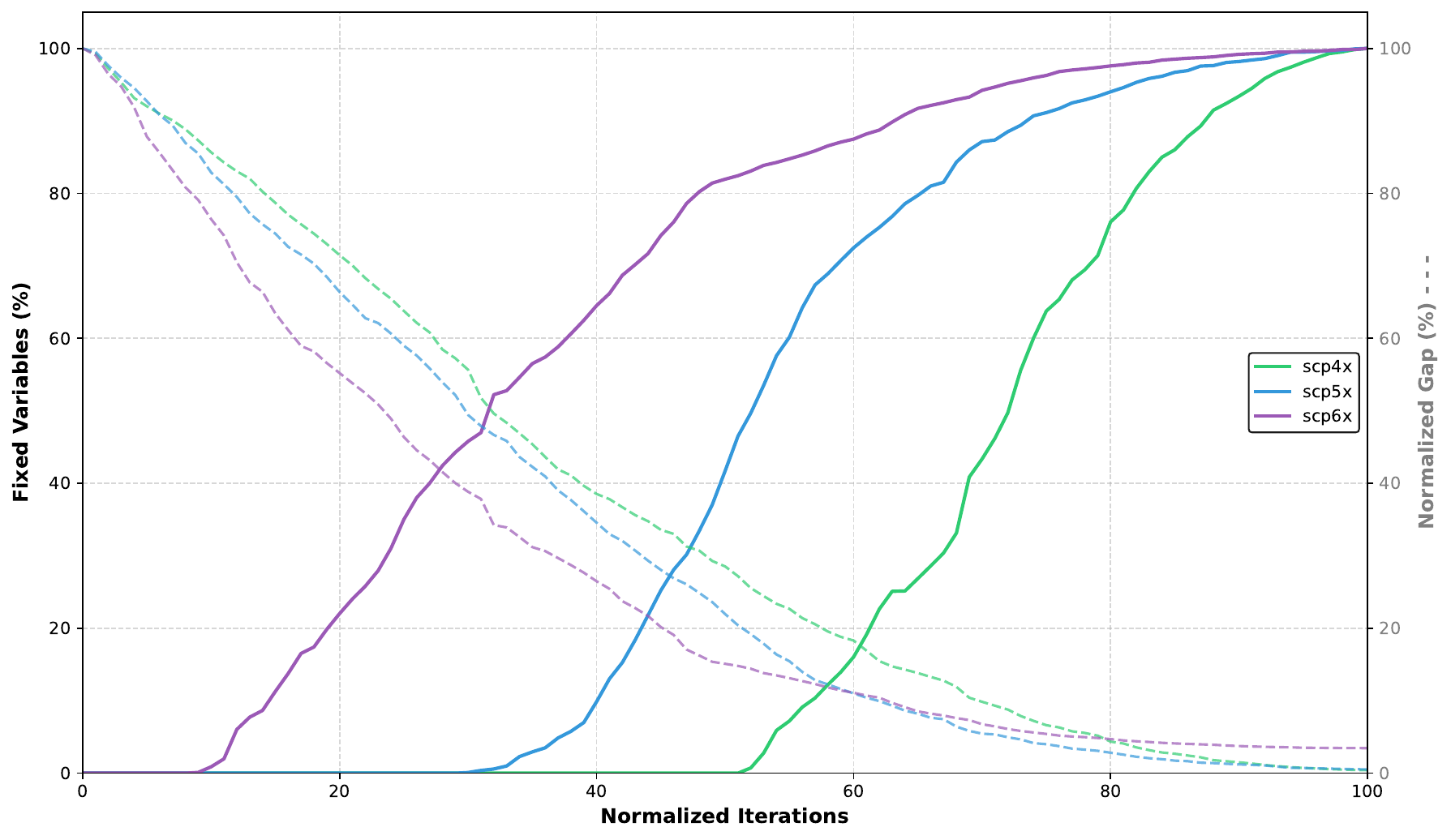} }}%
     \caption{DPF behavior per simplex iteration - Beasley instances - sets 4--6}%
    \label{fig:B_46}%
\end{figure}

\begin{figure}[!ht]%
    \centering
    {{\includegraphics[width=1.01\textwidth]{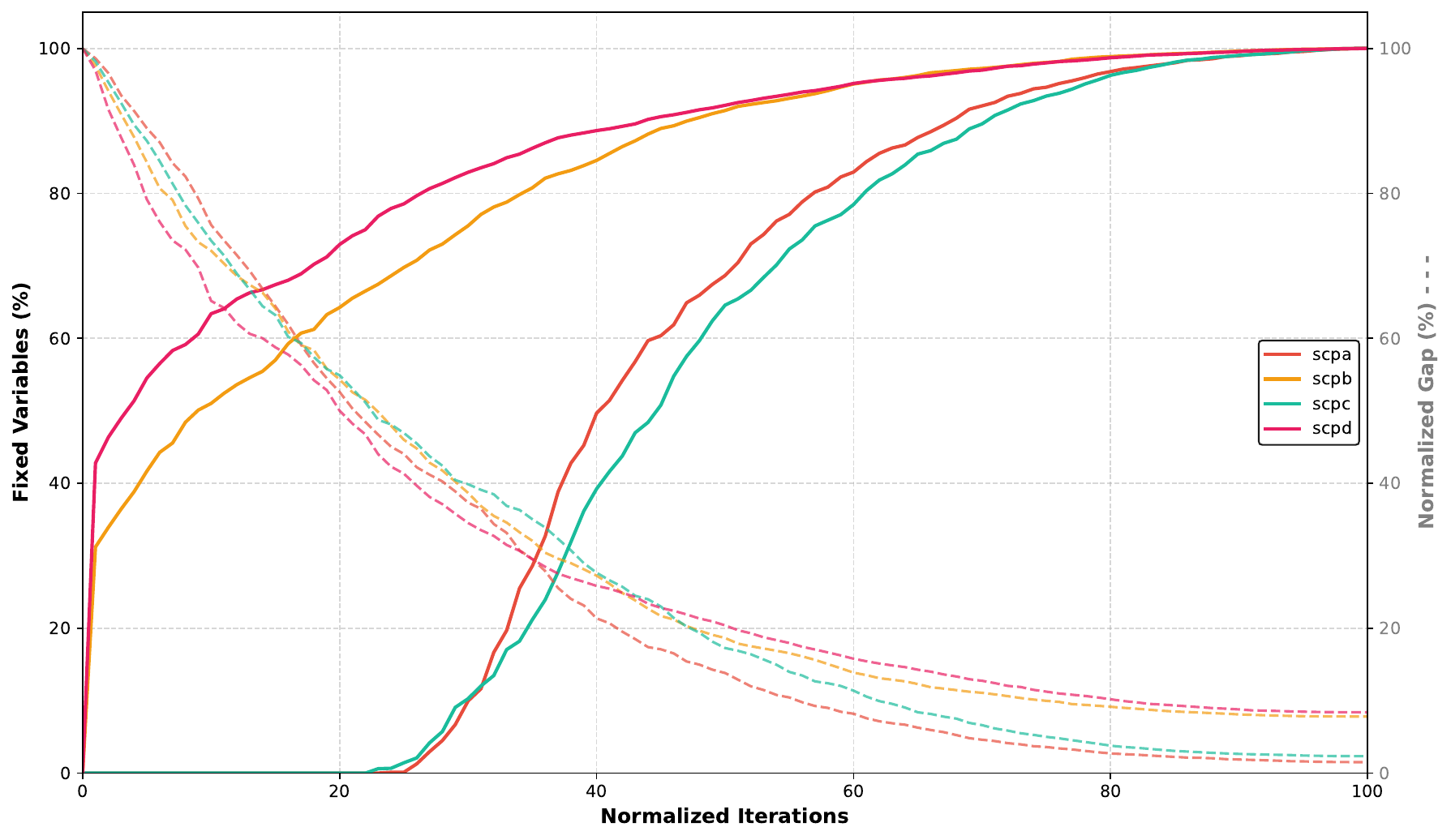} }}%
     \caption{DPF behavior per simplex iteration - Beasley instances - sets A--D}%
    \label{fig:B_AD}%
\end{figure}

\begin{figure}[!ht]%
    \centering
    {{\includegraphics[width=1.01\textwidth]{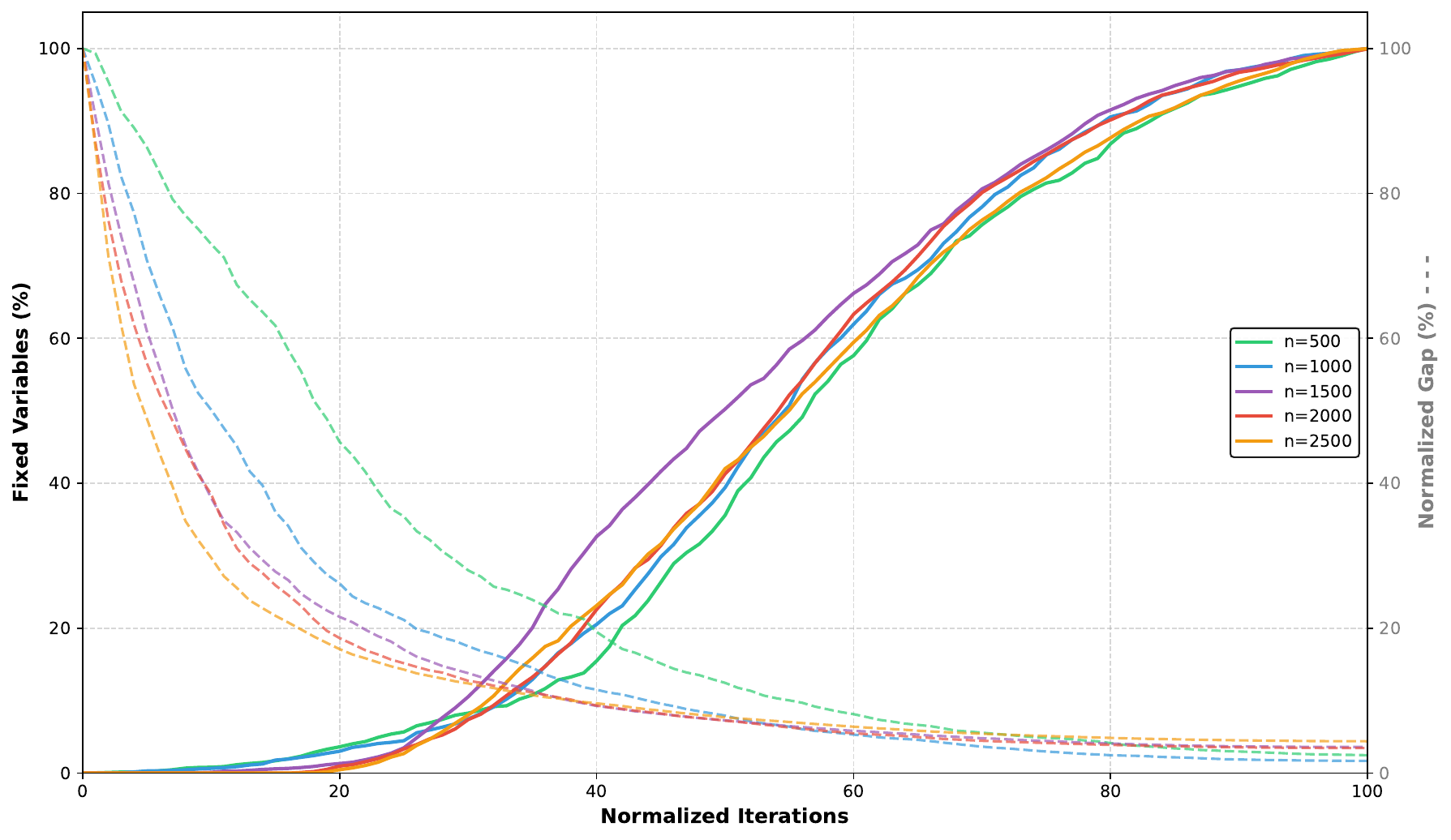} }}%
     \caption{DPF behavior per simplex iteration - SLS instances}%
    \label{fig:SLS}%
\end{figure}

\begin{figure}[!ht]%
    \centering
    {{\includegraphics[width=0.99\textwidth]{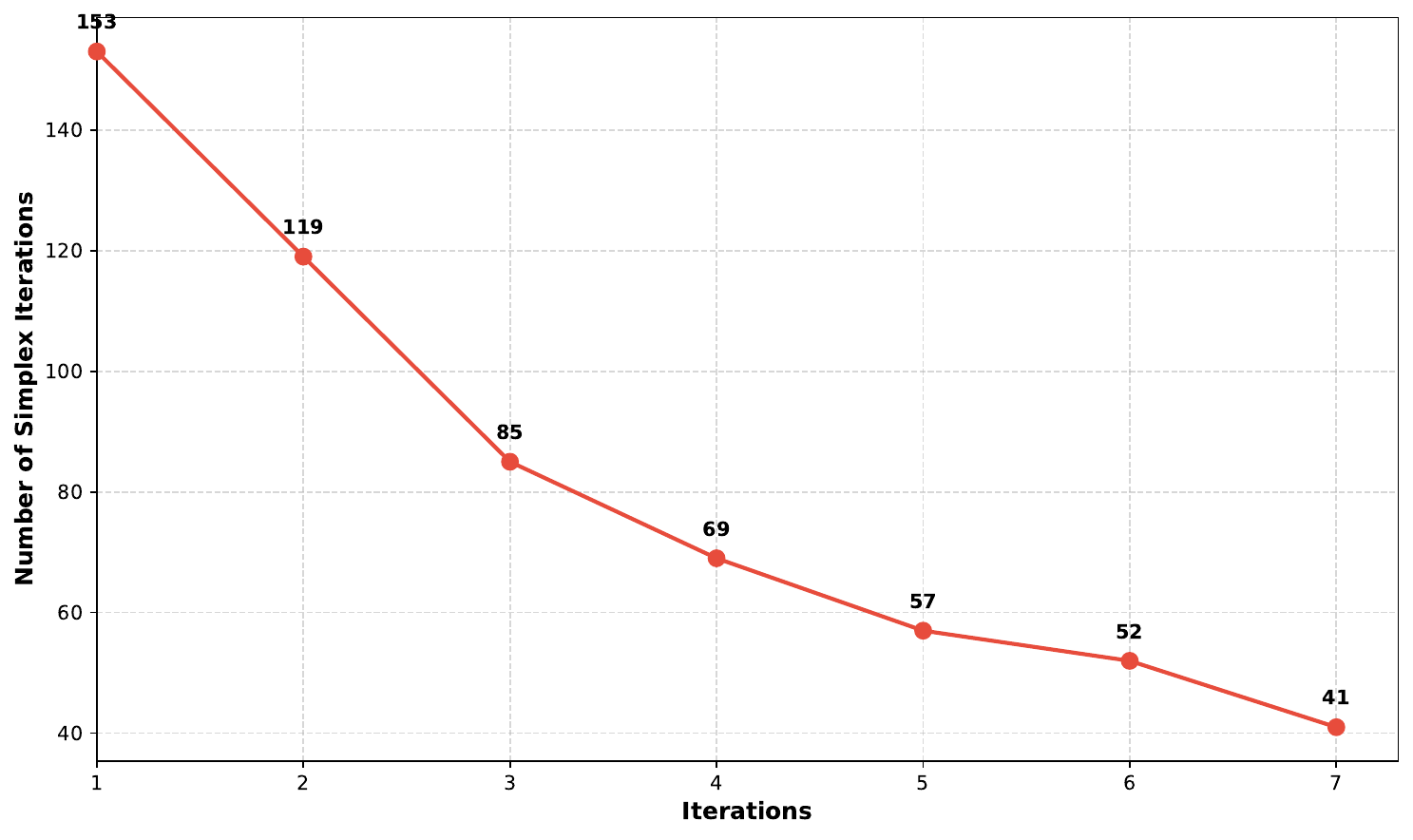} }}%
     \caption{Number of simplex iterations per $\mathcal{I}$(DPF) iteration - SLS instance ($n=1000, m=7585$)}%
    \label{fig:numSimplexit}%
\end{figure}

\begin{figure}[!ht]%
    \centering
    {{\includegraphics[width=1.01\textwidth]{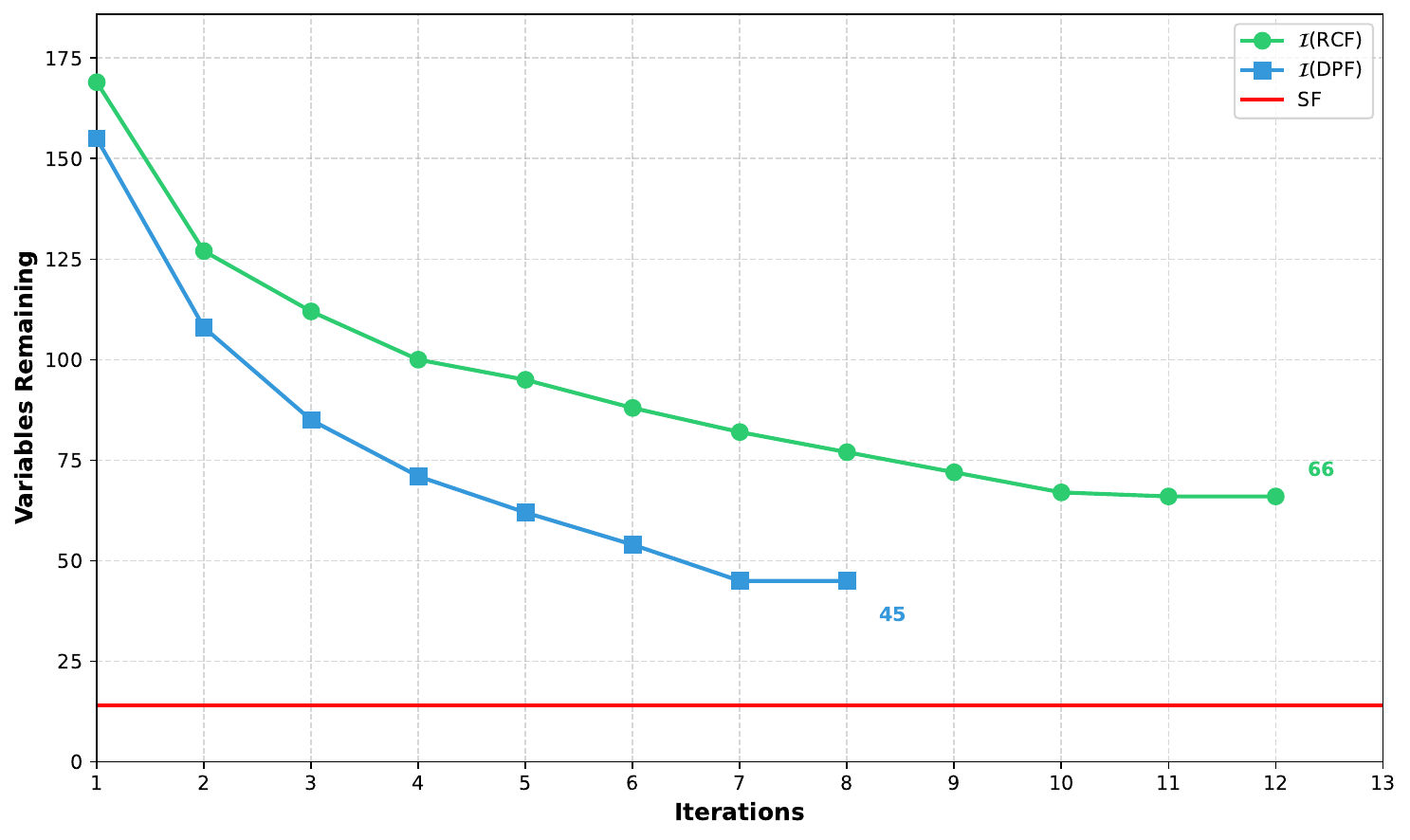} }}%
     \caption{Comparison of iterative procedures - SLS instance originally with $n=1000, m=7585$}%
    \label{fig:iterative_methods}%
\end{figure}

\FloatBarrier

\section{Outlook}\label{sec:outlook}

We have only scratched the surface, computationally,
and we 
have several directions to explore 
to see some practical computational impact of DPF. 
Of course to finally see good impact, 
careful running-time aware implementations will have to be instantiated
(in contrast to the proof-of-concept experiments that we 
have conducted thus far). A difficulty in fully realizing the improvements
that we believe are possible, is that largely MILO and MINLO
software treat the subsolvers as black boxes (although they do allow
warm- and hot-starting). It is interesting to see that such 
issues were successfully overcome in \cite{reducedcostfixing}, 
when they developed, in the early 1980's,
PIPX (which implemented RCF) on top of MPSX-MIP/370: 
``The fact that we were able
 to realize such a system demonstrates anew the feasibility of using
 commercial linear programming package components as building blocks
 in specialized applications''. In modern packages, such as Gurobi, CPLEX and SCIP, user callbacks in the overall solution process give us 
 the ability to intervene to some extent. But it does not appear to be 
 easy to extract, during the overall solution process, iteration-by-iteration solutions from the subsolver. So, to get the best-possible impact from
 DPF, it may be necessary to have a tighter integration of the overall solver
 process and the subsolvers. 

As above, we explain the directions in some detail 
only for problems where the integer variables are binary.
For the basic idea of DPF applied to 0/1 MILO, with relaxations 
solved by the simplex method on the dual, we plan
to investigate:
\begin{itemize}
\item[(i)] Diving experiments aimed at seeing the potential for 
DPF when initialized from an
optimal basis of parent relaxations. A main goal is
to begin to understand the trade-off between starting a
dual algorithm with an advanced initial solution (typical for B\&B)
versus the benefit to DPF of a larger number of iterations. 
\item[(ii)] Using knowledge gained from (i), develop a
 B\&B implementation.
\item[(iii)] Within B\&B, implement a version of 
\emph{strong branching} (see e.g., \cite{StongBranching1} 
and the many references therein),
with a limited number of
simplex-method iterations (on the dual) per strong-branching subproblem
(see \cite[Sec.~2.3]{StongBranching2}, where they mention that
precisely this type of strong branching was already in CPLEX v7.5, released December 2001).
We have seen in Figure \ref{fig:simplex_iterations} that stopping early can give us more than
a good upper bound on the objective value --- 
and applying DPF can give us
an improved lower bound (as compared to 
RCF) on the number of variables that will be fixed
for the integer subproblem. So we plan to use this extra information
to guide branching decisions.
Additionally, we can easily get globally-valid
implication cuts with no additional work
via the ``probing'' (see \cite{Savelsbergh}) 
that comes naturally from
what we have described: (a) for 
a ``down branch'' $x_i=0$: if we can fix $x_j=0$ for
$j\in S$ and $x_j=1$ for $j\in T$, then 
we have the globally-valid implication cuts $x_i \geq x_j$ for
$j\in S$ and $x_i \geq 1-x_j$ for
$j\in T$; (b) for 
an ``up branch'' $x_i=1$: if we can fix $x_j=0$ for
$j\in S$ and $x_j=1$ for $j\in T$, then 
we have the globally-valid implication cuts $x_i \leq 1-x_j$ for
$j\in S$ and $x_i\leq x_j$ for
$j\in T$.
\end{itemize}
Other ideas mentioned and worth investigating involve
applying DPF in the context of: 
\begin{itemize}
\item[(iv)] 0/1 MILO, with dual relaxations solved by
interior point algorithms (see e.g., \cite{Mitchell2,Mitchell1}).
\item[(v)] 0/1 MINLO, 
with subproblem relaxations solved by dual algorithms.
This is most natural for convex 0/1 MINLO
(and is well motivated by the success of variable-fixing based on 
one dual solution for various convex ``MESP'' relaxations; see \cite{FLbook}), 
but as we
saw in \S\ref{sec:LagrangeFixing}, the method could also 
be applied for nonconvex 0/1 MINLO.
\item[(vi)] The use of DPF in combination with early termination of primal-dual algorithms for relaxations of mixed-integer conic formulations that produce a dual-feasible solution at each iteration (e.g., see \cite{ChenGoulart}), with the specific aim of enhancing variable fixing for the mixed-integer second-order-cone formulation in \cite{damcov}. 
\end{itemize}

Finally, we note that, besides strong branching, several well-known MILO algorithms deliberately exploit dual-feasible solutions of the linear-programming (LP) relaxation that are not optimal, often obtained by interrupting the solution process early. For example, in algorithms based on Lagrangian relaxation, subgradient and bundle methods intentionally do not solve the dual problem to optimality. Instead, they are designed to generate a sequence of dual-feasible but non-optimal solutions, each providing a valid dual bound. Similarly, at each iteration of cutting-plane methods, the dual solution of the current LP relaxation is also dual feasible for the original LP relaxation. In all such cases, DPF could also be naturally integrated to enhance the effectiveness of these procedures.

\medskip
\noindent {\bf Acknowledgments.} M. Fampa was supported in part by CNPq grant 307167/2022-4.
J. Lee was supported in part by AFOSR grant FA9550-22-1-0172 and ONR grant
N00014-24-1-2694.

\bibliographystyle{plain}
\bibliography{covering}

\begin{thebibliography}{10}

\bibitem{StongBranching2}
Tobias Achterberg, Thorsten Koch, and Alexander Martin.
\newblock Branching rules revisited.
\newblock {\em Operations Research Letters}, 33(1):42--54, 2005.
\newblock \url{https://doi.org/10.1016/j.orl.2004.04.002}.

\bibitem{Anstreicher_BQP_entropy}
Kurt Anstreicher.
\newblock {Maximum-entropy sampling and the Boolean quadric polytope}.
\newblock {\em Journal of Global Optimization}, 72(4):603--618, 2018.
\newblock \url{https://doi.org/10.1007/s10898-018-0662-x}.

\bibitem{Kurt_linx}
Kurt Anstreicher.
\newblock Efficient solution of maximum-entropy sampling problems.
\newblock {\em Operations Research}, 68(6):1826--1835, 2020.
\newblock \url{https://doi.org/10.1287/opre.2019.1962}.

\bibitem{AFLW_Using}
Kurt Anstreicher, Marcia Fampa, Jon Lee, and Joy Williams.
\newblock Using continuous nonlinear relaxations to solve constrained maximum-entropy sampling problems.
\newblock {\em Mathematical Programming}, 85:221--240, 1999.
\newblock \url{https://doi.org/10.1007/s101070050055}.

\bibitem{Cire}
Omid~Sanei Bajgiran, Andre~A. Cire, and Louis-Martin Rousseau.
\newblock A first look at picking dual variables for maximizing reduced cost fixing.
\newblock In Domenico Salvagnin and Michele Lombardi, editors, {\em Integration of AI and OR Techniques in Constraint Programming}, pages 221--228. Springer International Publishing, 2017.
\newblock \url{https://doi.org/10.1007/978-3-319-59776-8_18}.

\bibitem{Balas1980b}
Egon Balas and Andrew Ho.
\newblock Set covering algorithms using cutting planes, heuristics, and subgradient optimization: A computational study.
\newblock In M.W. Padberg, editor, {\em Combinatorial Optimization}, pages 37--60. Springer, 1980.
\newblock \url{https://doi.org/10.1007/BFb0120886}.

\bibitem{BalasMartin}
Egon Balas and Clarence~H. Martin.
\newblock Pivot and complement - a heuristic for 0-1 programming.
\newblock {\em Management Science}, 26(1):86--96, 1980.
\newblock \url{https://doi.org/10.1287/mnsc.26.1.86}.

\bibitem{BEASLEY198785}
John~E. Beasley.
\newblock An algorithm for set covering problem.
\newblock {\em European Journal of Operational Research}, 31(1):85--93, 1987.
\newblock \url{https://doi.org/10.1016/0377-2217(87)90141-X}.

\bibitem{Beasley1990ORLibrarySCP}
John~E. Beasley.
\newblock {OR-Library: Distributing test problems by electronic mail}.
\newblock {\em Journal of the Operational Research Society}, 41(11):1069--1072, 1990.
\newblock \url{https://doi.org/10.1057/jors.1990.166}\,. Set covering problem benchmark instances from the OR-Library: \url{https://people.brunel.ac.uk/~mastjjb/jeb/orlib/scpinfo.html}.

\bibitem{ChenGoulart}
Yuwen Chen, Catherine Ning, and Paul Goulart.
\newblock A unified early termination technique for primal-dual algorithms in mixed integer conic programming.
\newblock {\em IEEE Control Systems Letters}, 7:2803--2808, 2023.
\newblock \url{http://doi.org/10.1109/LCSYS.2023.3289062}.

\bibitem{ChenFampaLee_Fact}
Zhongzhu Chen, Marcia Fampa, and Jon Lee.
\newblock On computing with some convex relaxations for the maximum-entropy sampling problem.
\newblock {\em INFORMS Journal on Computing}, 35:368--385, 2023.
\newblock \url{https://doi.org/10.1287/ijoc.2022.1264}.

\bibitem{reducedcostfixing}
Harlan Crowder, Ellis~L. Johnson, and Manfred Padberg.
\newblock Solving large-scale zero-one linear programming problems.
\newblock {\em Operations Research}, 31(5):803--834, 1983.
\newblock \url{https://www.jstor.org/stable/170888} (preprint version from 1981 at: \url{https://inria.hal.science/inria-00076472/document}).

\bibitem{damcov}
Claudia D'Ambrosio, Marcia Fampa, Jon Lee, and Felipe Sinnecker.
\newblock On a geometric graph-covering problem related to optimal safety-landing-site location.
\newblock {\em Discrete Applied Mathematics}, 379:613--634, 2026.
\newblock \url{https://doi.org/10.1016/j.dam.2025.09.036}.

\bibitem{StongBranching1}
Santanu~S. Dey, Yatharth Dubey, Marco Molinaro, and Prachi Shah.
\newblock A theoretical and computational analysis of full strong-branching.
\newblock {\em Mathematical Programming}, 205(1--2):303--336, 2024.
\newblock \url{https://doi.org/10.1007/s10107-023-01977-x}.

\bibitem{FLbook}
Marcia Fampa and Jon Lee.
\newblock {\em Maximum-Entropy Sampling: Algorithms and Application}.
\newblock Springer, 2022.
\newblock \url{https://doi.org/10.1007/978-3-031-13078-6}.

\bibitem{glpk}
GLPK.
\newblock {GNU} {L}inear {P}rogramming {K}it.
\newblock \url{https://www.gnu.org/software/glpk/}.

\bibitem{Karp72}
Richard~M. Karp.
\newblock Reducibility among combinatorial problems, in {R. Miller and J. Thatcher,} eds.
\newblock In {\em Complexity of Computer Computations, {\rm pp. 85--103}}. Plenum, 1972.
\newblock \url{ https://doi.org/10.1007/978-1-4684-2001-2_9}.

\bibitem{Land1960}
Ailsa~H. Land and Allison~G. Doig.
\newblock An automatic method of solving discrete programming problems.
\newblock {\em Econometrica}, 28(3):497--520, 1960.
\newblock \url{https://doi.org/10.2307/1910129}.

\bibitem{Mitchell2}
John~E. Mitchell and Michael~J. Todd.
\newblock Solving matching problems using {K}armarkar's algorithm.
\newblock In {\em Mathematical Developments Arising from Linear Programming ({B}runswick, {ME}, 1988)}, volume 114 of {\em Contemporary Mathematics}, pages 309--318. American Mathematical Society, Providence, RI, 1990.
\newblock \url{https://doi.org/10.1090/conm/114/1097881}.

\bibitem{Mitchell1}
John~E. Mitchell and Michael~J. Todd.
\newblock Solving combinatorial optimization problems using {K}armakar's algorithm.
\newblock {\em Mathematical Programming}, 56(3):245--284, 1992.
\newblock \url{https://doi.org/10.1007/BF01580902}.

\bibitem{NWbook}
George~L. Nemhauser and Laurence~A. Wolsey.
\newblock {\em Integer and Combinatorial Optimization}.
\newblock Wiley, USA, 1988.
\newblock \url{https://doi.org/10.1002/9781118627372}.

\bibitem{PonteFampaLeeMPB}
Gabriel Ponte, Marcia Fampa, and Jon Lee.
\newblock Branch-and-bound for integer {D}-optimality with fast local search and variable-bound tightening.
\newblock {\em Mathematical Programming, Series B}, 2025.
\newblock \url{http://doi.org/10.1007/s10107-025-02196-2}.

\bibitem{Savelsbergh}
Martin Savelsbergh.
\newblock Preprocessing and probing techniques for mixed integer programming problems.
\newblock {\em ORSA Journal on Computing}, 6:445--454, 11 1994.
\newblock \url{https://doi.org/10.1287/ijoc.6.4.445}.

\end{thebibliography}

\clearpage

\appendix
\section{More numerical results}\label{sec:app}

\begin{table}[htbp]
\centering
\caption{Comparison for Beasley instances (problem dimension $(n, m)$) or 
($n,m\, [\#$it.$]$)}
\label{tab:beasley-preprocessing-comparison}
\small
\setlength{\tabcolsep}{3pt}
\begin{tabular}{l rl rl rl rlc rlc rl}
&\multicolumn{2}{c}{\phantom{A}Original\phantom{AAA}} & \multicolumn{2}{c}{RCF+DRE\phantom{A}} & \multicolumn{2}{c}{DPF+DRE} & \multicolumn{3}{c}{$\mathcal{I}$(RCF+DRE)}& \multicolumn{3}{c}{$\mathcal{I}$(DPF+DRE)} & \multicolumn{2}{c}{SF+DRE} \\
\hline
\hline
scp46 & $1000,$ & $200$ & $126,$ & $129$ & $123,$ & $119$ & $108,$ & $118$ & $[5]$ & $96,$ & $97$ & $[6]$ & $86,$ & $73$ \\
scp48 & $1000,$ & $200$ & $122,$ & $134$ & $118,$ & $125$ & $102,$ & $111$ & $[6]$ & $92,$ & $94$ & $[7]$ & $79,$ & $64$ \\
scp49 & $1000,$ & $200$ & $131,$ & $149$ & $128,$ & $149$ & $118,$ & $134$ & $[5]$ & $116,$ & $133$ & $[5]$ & $81,$ & $74$ \\
scp410 & $1000,$ & $200$ & $94,$ & $93$ & $93,$ & $90$ & $70,$ & $69$ & $[7]$ & $69,$ & $64$ & $[6]$ & $67,$ & $65$ \\
scp51 & $2000,$ & $200$ & $133,$ & $160$ & $128,$ & $147$ & $123,$ & $151$ & $[4]$ & $115,$ & $132$ & $[6]$ & $93,$ & $88$ \\
scp52 & $2000,$ & $200$ & $166,$ & $191$ & $158,$ & $183$ & $166,$ & $191$ & $[3]$ & $157,$ & $183$ & $[3]$ & $88,$ & $76$ \\
scp54 & $2000,$ & $200$ & $124,$ & $123$ & $124,$ & $123$ & $113,$ & $119$ & $[5]$ & $111,$ & $115$ & $[6]$ & $102,$ & $89$ \\
scp56 & $2000,$ & $200$ & $108,$ & $111$ & $105,$ & $105$ & $59,$ & $59$ & $[6]$ & $58,$ & $58$ & $[6]$ & $59,$ & $59$ \\
scp57 & $2000,$ & $200$ & $122,$ & $136$ & $121,$ & $135$ & $108,$ & $115$ & $[6]$ & $104,$ & $109$ & $[6]$ & $83,$ & $79$ \\
scp58 & $2000,$ & $200$ & $123,$ & $156$ & $121,$ & $156$ & $98,$ & $120$ & $[6]$ & $93,$ & $115$ & $[6]$ & $76,$ & $69$ \\
scp61 & $1000,$ & $200$ & $128,$ & $199$ & $127,$ & $199$ & $128,$ & $199$ & $[3]$ & $127,$ & $199$ & $[3]$ & $101,$ & $181$ \\
scp62 & $1000,$ & $200$ & $125,$ & $199$ & $121,$ & $198$ & $125,$ & $199$ & $[3]$ & $121,$ & $198$ & $[3]$ & $94,$ & $180$ \\
scp63 & $1000,$ & $200$ & $117,$ & $188$ & $109,$ & $187$ & $117,$ & $188$ & $[3]$ & $108,$ & $186$ & $[4]$ & $67,$ & $97$ \\
scp64 & $1000,$ & $200$ & $86,$ & $165$ & $85,$ & $164$ & $78,$ & $159$ & $[5]$ & $76,$ & $159$ & $[5]$ & $63,$ & $77$ \\
scp65 & $1000,$ & $200$ & $144,$ & $193$ & $142,$ & $192$ & $144,$ & $193$ & $[3]$ & $141,$ & $192$ & $[3]$ & $107,$ & $152$ \\
scpa1 & $3000,$ & $300$ & $305,$ & $299$ & $300,$ & $299$ & $305,$ & $299$ & $[3]$ & $296,$ & $298$ & $[4]$ & $223,$ & $269$ \\
scpa2 & $3000,$ & $300$ & $252,$ & $285$ & $247,$ & $285$ & $252,$ & $285$ & $[3]$ & $246,$ & $285$ & $[3]$ & $195,$ & $231$ \\
scpa3 & $3000,$ & $300$ & $246,$ & $275$ & $245,$ & $275$ & $241,$ & $275$ & $[3]$ & $240,$ & $275$ & $[3]$ & $198,$ & $238$ \\
scpa4 & $3000,$ & $300$ & $208,$ & $280$ & $204,$ & $279$ & $202,$ & $278$ & $[4]$ & $187,$ & $240$ & $[7]$ & $125,$ & $120$ \\
scpa5 & $3000,$ & $300$ & $165,$ & $215$ & $161,$ & $204$ & $146,$ & $201$ & $[5]$ & $146,$ & $183$ & $[4]$ & $103,$ & $94$ \\
scpb1 & $3000,$ & $300$ & $212,$ & $300$ & $208,$ & $300$ & $212,$ & $300$ & $[2]$ & $208,$ & $300$ & $[2]$ & $150,$ & $278$ \\
scpb2 & $3000,$ & $300$ & $298,$ & $300$ & $291,$ & $300$ & $298,$ & $300$ & $[2]$ & $291,$ & $300$ & $[2]$ & $239,$ & $300$ \\
scpb3 & $3000,$ & $300$ & $258,$ & $300$ & $253,$ & $300$ & $258,$ & $300$ & $[2]$ & $253,$ & $300$ & $[2]$ & $193,$ & $300$ \\
scpb4 & $3000,$ & $300$ & $330,$ & $300$ & $323,$ & $300$ & $330,$ & $300$ & $[2]$ & $323,$ & $300$ & $[2]$ & $263,$ & $300$ \\
scpb5 & $3000,$ & $300$ & $207,$ & $300$ & $202,$ & $300$ & $207,$ & $300$ & $[2]$ & $202,$ & $300$ & $[2]$ & $160,$ & $299$ \\
scpc1 & $4000,$ & $400$ & $273,$ & $363$ & $268,$ & $362$ & $270,$ & $363$ & $[3]$ & $266,$ & $362$ & $[3]$ & $187,$ & $247$ \\
scpc2 & $4000,$ & $400$ & $399,$ & $386$ & $397,$ & $386$ & $398,$ & $386$ & $[3]$ & $396,$ & $386$ & $[3]$ & $320,$ & $373$ \\
scpc3 & $4000,$ & $400$ & $500,$ & $386$ & $487,$ & $386$ & $496,$ & $386$ & $[3]$ & $484,$ & $386$ & $[3]$ & $393,$ & $386$ \\
scpc4 & $4000,$ & $400$ & $356,$ & $365$ & $347,$ & $365$ & $354,$ & $365$ & $[3]$ & $341,$ & $365$ & $[3]$ & $263,$ & $344$ \\
scpc5 & $4000,$ & $400$ & $274,$ & $376$ & $267,$ & $376$ & $273,$ & $376$ & $[3]$ & $266,$ & $376$ & $[3]$ & $199,$ & $280$ \\
scpd1 & $4000,$ & $400$ & $303,$ & $400$ & $291,$ & $400$ & $303,$ & $400$ & $[2]$ & $291,$ & $400$ & $[2]$ & $209,$ & $400$ \\
scpd2 & $4000,$ & $400$ & $369,$ & $400$ & $367,$ & $400$ & $369,$ & $400$ & $[2]$ & $367,$ & $400$ & $[2]$ & $297,$ & $400$ \\
scpd3 & $4000,$ & $400$ & $376,$ & $400$ & $369,$ & $400$ & $376,$ & $400$ & $[2]$ & $369,$ & $400$ & $[2]$ & $313,$ & $400$ \\
scpd4 & $4000,$ & $400$ & $358,$ & $400$ & $340,$ & $400$ & $358,$ & $400$ & $[2]$ & $340,$ & $400$ & $[2]$ & $274,$ & $400$ \\
scpd5 & $4000,$ & $400$ & $188,$ & $400$ & $184,$ & $400$ & $188,$ & $400$ & $[2]$ & $184,$ & $400$ & $[2]$ & $125,$ & $340$ \\
\end{tabular}
\end{table}

\begin{table}[htbp]
\centering
\caption{Comparison for SLS instances: problem dimension ($n,m$) or 
($n,m\, [\#$it.$]$)}
\label{tab:sls-preprocessing-comparison}
\small
\setlength{\tabcolsep}{3pt}
\begin{tabular}{rl rl rl rlc rlc rl}
\multicolumn{2}{c}{\phantom{A}Original\phantom{AAA}} & \multicolumn{2}{c}{RCF+DRE\phantom{A}} & \multicolumn{2}{c}{DPF+DRE} & \multicolumn{3}{c}{$\mathcal{I}$(RCF+DRE)}& \multicolumn{3}{c}{$\mathcal{I}$(DPF+DRE)} & \multicolumn{2}{c}{SF+DRE} \\
\hline
$500,$ & $2468$ & $134,$ & $138$ & $129,$ & $135$ & $66,$ & $61$ & $[7]$ & $59,$ & $55$ & $[7]$ & $32,$ & $20$ \\
$500,$ & $2626$ & $163,$ & $186$ & $162,$ & $186$ & $106,$ & $107$ & $[8]$ & $103,$ & $107$ & $[6]$ & $53,$ & $48$ \\
$500,$ & $2436$ & $98,$ & $98$ & $97,$ & $96$ & $68,$ & $67$ & $[6]$ & $57,$ & $53$ & $[6]$ & $36,$ & $21$ \\
$500,$ & $2451$ & $107,$ & $119$ & $100,$ & $109$ & $65,$ & $71$ & $[5]$ & $57,$ & $60$ & $[5]$ & $23,$ & $21$ \\
$500,$ & $2517$ & $104,$ & $99$ & $103,$ & $98$ & $74,$ & $69$ & $[4]$ & $54,$ & $49$ & $[8]$ & $24,$ & $17$ \\
$1000,$ & $8232$ & $375,$ & $645$ & $362,$ & $612$ & $340,$ & $583$ & $[5]$ & $309,$ & $517$ & $[5]$ & $96,$ & $96$ \\
$1000,$ & $7585$ & $169,$ & $240$ & $155,$ & $226$ & $66,$ & $77$ & $[12]$ & $45,$ & $43$ & $[8]$ & $14,$ & $[14]$ \\
$1000,$ & $7325$ & $175,$ & $246$ & $172,$ & $245$ & $120,$ & $174$ & $[5]$ & $83,$ & $104$ & $[9]$ & $27,$ & $20$ \\
$1000,$ & $8415$ & $225,$ & $321$ & $219,$ & $307$ & $154,$ & $202$ & $[9]$ & $128,$ & $162$ & $[9]$ & $29,$ & $22$ \\
$1000,$ & $8068$ & $252,$ & $398$ & $239,$ & $374$ & $222,$ & $337$ & $[5]$ & $186,$ & $289$ & $[9]$ & $45,$ & $32$ \\
$1500,$ & $15083$ & $659,$ & $1431$ & $646,$ & $1398$ & $628,$ & $1367$ & $[5]$ & $594,$ & $1282$ & $[5]$ & $243,$ & $415$ \\
$1500,$ & $14587$ & $506,$ & $999$ & $482,$ & $931$ & $469,$ & $924$ & $[7]$ & $443,$ & $854$ & $[7]$ & $163,$ & $208$ \\
$1500,$ & $14732$ & $690,$ & $1509$ & $665,$ & $1458$ & $634,$ & $1371$ & $[7]$ & $601,$ & $1298$ & $[8]$ & $236,$ & $379$ \\
$1500,$ & $14474$ & $555,$ & $1132$ & $528,$ & $1062$ & $506,$ & $1006$ & $[7]$ & $466,$ & $917$ & $[9]$ & $164,$ & $220$ \\
$1500,$ & $15440$ & $605,$ & $1221$ & $592,$ & $1173$ & $563,$ & $1124$ & $[5]$ & $522,$ & $1018$ & $[9]$ & $125,$ & $88$ \\
$2000,$ & $24751$ & $924,$ & $2219$ & $906,$ & $2174$ & $866,$ & $2055$ & $[8]$ & $807,$ & $1893$ & $[12]$ & $335,$ & $555$ \\
$2000,$ & $24415$ & $787,$ & $1811$ & $755,$ & $1720$ & $744,$ & $1711$ & $[4]$ & $679,$ & $1519$ & $[14]$ & $202,$ & $280$ \\
$2000,$ & $25769$ & $987,$ & $2544$ & $957,$ & $2470$ & $926,$ & $2362$ & $[7]$ & $855,$ & $2150$ & $[15]$ & $394,$ & $770$ \\
$2000,$ & $24386$ & $830,$ & $1956$ & $805,$ & $1873$ & $764,$ & $1792$ & $[7]$ & $716,$ & $1665$ & $[10]$ & $212,$ & $274$ \\
$2000,$ & $24487$ & $800,$ & $1872$ & $782,$ & $1826$ & $745,$ & $1736$ & $[7]$ & $695,$ & $1612$ & $[7]$ & $219,$ & $336$ \\
$2500,$ & $36217$ & $1012,$ & $2772$ & $989,$ & $2702$ & $936,$ & $2540$ & $[11]$ & $885,$ & $2375$ & $[9]$ & $307,$ & $542$ \\
$2500,$ & $34719$ & $1652,$ & $4671$ & $1634,$ & $4614$ & $1626,$ & $4588$ & $[4]$ & $1566,$ & $4395$ & $[11]$ & $897,$ & $2284$ \\
$2500,$ & $35438$ & $1322,$ & $3705$ & $1281,$ & $3602$ & $1263,$ & $3520$ & $[11]$ & $1202,$ & $3310$ & $[8]$ & $598,$ & $1299$ \\
$2500,$ & $33296$ & $1545,$ & $4192$ & $1508,$ & $4078$ & $1503,$ & $4079$ & $[5]$ & $1449,$ & $3913$ & $[9]$ & $781,$ & $1886$ \\
$2500,$ & $34093$ & $1281,$ & $3470$ & $1259,$ & $3389$ & $1231,$ & $3318$ & $[6]$ & $1156,$ & $3074$ & $[15]$ & $536,$ & $1151$ \\
\end{tabular}
\end{table}

\end{document}